\theoremstyle{plain}
\newtheorem{lemma}{Lemma}[section]
\newtheorem{prop}[lemma]{Proposition}
\newtheorem{theo}[lemma]{Theorem}
\newtheorem{coro}[lemma]{Corollary}
\theoremstyle{remark}
\newtheorem{rem}[lemma]{Remark}
\theoremstyle{definition}
\newtheorem{definition}[lemma]{Definition}
\newtheorem{ex}[lemma]{Example}
\DeclareMathOperator{\obj}{Obj}
\DeclareMathOperator{\mor}{Mor}
\DeclareMathOperator{\aut}{Aut}
\DeclareMathOperator{\im}{Im}
\DeclareMathOperator{\Hom}{Hom}
\DeclareMathOperator{\coker}{\textnormal{coker}}
\DeclareMathOperator{\sop}{\textnormal{sop}}
\DeclareMathOperator{\grpd}{\textnormal{\bf Grpd}}
\DeclareMathOperator{\deck}{\textnormal{\bf Deck}}
\newcommand{\op}{\textnormal{op}}
\newcommand{\id}{\mathrm{Id}}
\newcommand{\C}{\mathscr{C}}
\newcommand{\G}{\mathcal{G}}
\newcommand{\n}{\mathcal{N}}
\newcommand{\Ds}{\mathscr{D}}
\newcommand{\N}{\mathbb{N}}
\newcommand{\Z}{\mathbb{Z}}
\newcommand{\K}{\mathcal{K}}
\newcommand{\loc}{\mathcal{L}}
\newcommand{\set}{\textnormal{\bf Set}}
\newcommand{\Top}{\textnormal{\bf Top}}
\newcommand{\ATop}{\textnormal{\bf ATop}}
\newcommand{\pos}{\textnormal{\bf Pos}}
\newcommand{\ord}{\textnormal{\bf Ord}}
\newcommand{\cov}{\textnormal{\bf Cov}}
\newcommand{\cat}{\textnormal{\bf Cat}}
\newcommand{\scpx}{\textnormal{\bf SCpx}}
\newcommand{\grp}{\textnormal{\bf Grp}}
\title{Regular coverings and fundamental groupoids of Alexandroff spaces}
\author{Nicol\'as Cianci}
\address{Facultad de Ciencias Exactas y Naturales \\ Universidad Nacional de Cuyo \\ Mendoza, Argentina.}
\email{nicocian@gmail.com}
\subjclass[2010]{Primary: 55R15, 18B35, 14H30, 06A07. Secondary: 55R37, 55Q99, 06A07.}
\keywords{Alexandroff space, Poset, Covering map, Fundamental groupoid, Fundamental group.}
\thanks{Research partially supported by grant M044 of SeCTyP, UNCuyo. The author was also partially supported by a CONICET doctoral fellowship.}
\begin{document}
	\begin{abstract}
		We summarize several results about the regular coverings and the fundamental groupoids of Alexandroff spaces. In particular, we show that the fundamental groupoid of an Alexandroff space $X$ is naturally isomorphic to the localization, at its set of morphisms, of the thin category associated to the set $X$ considered as a preordered set with the specialization preorder. We also show that the regular coverings of an Alexandroff space $X$ are represented by certain morphism-inverting functors with domain $X$, extending a result of E. Minian and J. Barmak about the regular coverings of locally finite T$_0$ spaces. 
	\end{abstract}
	\maketitle

	\section{Introduction}
	Alexandroff spaces are topological spaces in which arbitrary intersections of open sets are open. These spaces, which are tipically not found among the most well studied spaces in algebraic topology, were introduced and studied by P. Alexandroff in his 1937 article \cite{alexandroff1937diskrete}.
	
	Alexandroff constructed a bijective correspondence between T$_0$ Alexandroff spaces and partially ordered sets. This correspondence is, in fact, an isomorphism between the category $\ATop_0$ of T$_0$ Alexandroff spaces and continuous maps and the category $\pos$ of partially ordered sets and order-preserving morphisms. It is not hard to see that this isomorphism extends to an isomorphism between the category $\ATop$ of Alexandroff spaces and continuous maps and the category $\ord$ of preordered sets and order-preserving morphisms.
	
	Alexandroff's work provided a basis for the later works of R. E. Stong \cite{stong1966finite} and M. McCord \cite{mccord1966singular} about finite topological spaces which constitute an important class of Alexandroff spaces. 
	
	Stong constructed a canonical form for the homotopy type of a finite topological space $X$, called the \emph{core} of $X$, which is a T$_0$ strong deformation retract of $X$ with no \emph{beat points}\footnote{This terminology is due to J. P. May \cite{may2003fts}.} and was able to prove that
	\begin{enumerate}
		\item every finite topological space has a core which is unique up to homeomorphism, and
		\item two finite topological spaces are homotopically equivalent if and only if their cores are homeomorphic. 
	\end{enumerate}
	Moreover, a core of a finite topological space can be constructed by performing a simple algorithm and hence, the problem of determining whether two finite topological spaces are homotopically equivalent can be easily reduced to the problem of determining whether two finite posets are isomorphic.
	
	Stong's results about the classification of homotopy types of finite spaces through the study of cores were later extended to larger classes of Alexandroff spaces \cite{arenas1999alexandroff,kukiela2010homotopy,chen2015cores}.
	
	On the other hand, M. McCord studied weak homotopy types of Alexandroff spaces and showed that 
	\begin{enumerate}
		\item for every Alexandroff space $X$, there exists a simplicial complex $K$ and a weak homotopy equivalence $f_X\colon |K|\to X$, and
		\item for every simplicial complex $K$ there exists an Alexandroff space $X$ and a weak homotopy equivalence $f_K\colon |K|\to X$,
	\end{enumerate}
	where $|\cdot|$ is the usual \emph{geometric realization} functor from the category of simplicial complexes and simplicial maps to the category of topological spaces and continuous maps. 
	Hence, McCord's results show that the problem of computing the homotopy, homology and cohomology groups of Alexandroff spaces is equivalent to the problem of computing the homotopy, homology and cohomology groups of simplicial complexes. In particular, while the computation of the singular homology groups of finite topological spaces can be performed by standard methods (see \cite[Section 2]{cianci2017new}), the example at the end of \cite{mccord1966singular} shows that there exists a space with six points which is weakly equivalent to the sphere $S^{2}$ and hence its homotopy groups of high degree remain unknown. 
	
	As a consequence of McCord's results, we see that maps from spheres to Alexandroff spaces are as ``interesting'' as maps from spheres to polyhedra, for most practical purposes within the scope of the theory of weak homotopy types. On the other hand, since the category $\ord$ can be regarded as a full subcategory of $\cat$, the isomorphism $\ATop\cong\ord$ allows us to consider functors from Alexandroff spaces to other categories, such as groupoids. This is the main idea  
	behind the article \cite{barmak2012colorings} of J. Barmak and E. Minian, where the fundamental group and the regular coverings of \emph{locally finite} T$_0$ spaces are studied by means of combinatorial methods.
	Following the ideas of \cite{barmak2012colorings}, we show that the canonical inclusion $\iota_X\colon X\to \loc X$ of an Alexandroff space, regarded as a preordered set, into its localization induces an isomorphism of fundamental groups for every base point. This result allows us to study the fundamental groupoids and the regular coverings of Alexandroff spaces by means of classic combinatorial tools from Category Theory.
	
	The main purpose of this article is to summarize several results about fundamental groupoids and regular coverings of Alexandroff spaces for future reference.
	Most of these are original. Some of the results of subsection \ref{subs:coverings} are mentioned in \cite{cianci2018splitting} and were included both for the sake of completeness and to provide several omitted details in their original formulations and proofs. Other results were already known and have been included in this article either because detailed proofs could not be found in the literature or because we wished to provide alternative proofs of our own. 
	
	\section{Preliminaries and notation}
	Throughout this article, we will write:
	\begin{itemize}
		\item $\set$ for the category of sets and functions.
		\item $\cat$ for the category of small categories and functors.
		\item $\cat_*$ for the category of pointed objects in $\cat$, that is, the category of small categories with basepoint and basepoint-preserving functors.
		\item $\Top$ for the category of topological spaces and continuous functions.
		\item $\grpd$ for the category of groupoids and groupoid homomorphisms.
		\item $\grp$ for the category of groups and group homomorphisms.
		\item $\ord$ for the category of preordered sets and order preserving morphisms.
		\item $\pos$ for the category of ordered sets and order preserving morphisms.
		\item $\Ds^{\C}$ for the category of functors from $\C$ to $\Ds$ and natural transformations, for categories $\C$ and $\Ds$. 
	\end{itemize}
	Since preordered sets are equivalent to \emph{thin categories}, we will regard $\ord$ and $\pos$ as full subcategories of $\cat$ in the usual fashion\footnote{That is, by considering a preordered set $(X,\leq)$ as a small category with $X$ as its set of objects, and with a unique arrow from $x$ to $y$, which is usually denoted by $x\to y$ (or simply $x\leq y$), if and only if $x\leq y$, for every $x,y\in X$.}. On the other hand, since groups are equivalent to one-object groupoids, we will regard $\grp$ as a full subcategory of $\grpd$ in the usual fashion	as well. Thus we have the inclusions of categories
	\[
		\pos\hookrightarrow \ord\hookrightarrow\cat
	\]
	and
	\[
	\grp\hookrightarrow \grpd\hookrightarrow\cat.
	\]
	
	For a category $\C$ and an object $c_0$ of $\C$, the group of automorphisms of $c_0$ in $\C$ will be denoted by $\aut_{\C}(c_0)$. Note that, when considered as a category, the group $\aut_{\C}(c_0)$ is nothing but the subcategory of $\C$ with object $c_0$ and arrows the automorphisms of $c_0$. Hence, we will have an inclusion functor 
	\[\aut_{\C}(c_0)\hookrightarrow \C.\]
	\subsection{Alexandroff spaces}
	\begin{definition}
		A topological space $X$ is called an \emph{Alexandroff space} (or \emph{A-space} for short) if its topology is closed under (arbitrary) intersections, that is, the intersection of open sets of $X$ is again open in $X$.
		
		The category of Alexandroff spaces and continuous functions (that is, the full subcategory of $\Top$ whose objects are the Alexandroff spaces) will be denoted by $\ATop$. The full subcategory of $\ATop$ whose objects are the T$_0$ Alexandroff spaces will be denoted by $\ATop_0$. We have the following inclusions of categories:
		\[
		\ATop_0\hookrightarrow \ATop\hookrightarrow\Top.
		\]
	\end{definition}
	\begin{definition}
		Let $X$ be an Alexandroff space and let $x\in X$. The \emph{minimal open set} $U_x$ of $x$ is defined as the intersection of every open set in $X$ that contains $x$. It is immediate that $U_x$ is the minimal open neighbourhood of $x$ with respect to set inclusion.
	\end{definition}

	\begin{definition}\label{def:spec_order}
		Let $X$ be an Alexandroff space. We define the \emph{specialization preorder} in $X$ as the preorder $\leq_X$ (or $\leq$ if the space $X$ is understood) defined by
		\[
		x\leq_X y\quad\text{if and only if} \quad U_x\subseteq U_y
		\]
		for every $x,y\in X$. Equivalently, 
		\[
		x\leq_X y\quad\text{if and only if} \quad x\in U_y
		\]
		for every $x,y\in X$.
		
		We will regard $X$ as a preordered set with the specialization preorder without further notice.
		Note that a subset of $X$ is open if and only if it is a lower set, and that it is a closed set if and only if it is an upper set. 
		
		It is easy to see that $\leq_X$ is an order if and only if $X$ is a T$_0$ space.
	\end{definition}

	It is well known (and not hard to prove) that the construction described in \ref{def:spec_order} defines an isomorphism between the categories $\ATop$ and $\ord$ which clearly restricts to an isomorphism between $\ATop_0$ and $\pos$.
	
	In \cite{mccord1966singular}, M. McCord shows that for every Alexandroff T$_0$ space $X$ there exists a simplicial complex $\K(X)$ and a weak homotopy equivalence $\varphi_X\colon |\K(X)|\to X$, where $|\K(X)|$ denotes the \emph{geometric realization} of $\K(X)$ (the definition and properties of the \emph{geometric realization functor} $|\cdot|\colon \scpx\to\Top$ from the category $\scpx$ of simplicial complexes to the category of topological spaces can be found in \cite[Chapter~3]{spanier1981algebraic}). The simplicial complex $\K(X)$ has the elements of $X$ as vertices and the finite non-empty chains of $X$ as simplices, and the function $\varphi_X$ maps every element of $|\K(X)|$ to the minimum of its support. It is not hard to see that the weak equivalence $\varphi_X$ is natural in $X$: for every continuous map $f\colon X\to Y$ between Alexandroff T$_0$ spaces, one has that $f\varphi_X=\varphi_Y|\K(f)|$ where $\K(f)\colon \K(X)\to \K(Y)$ is the obvious simplicial map induced by $f$.
	In other words, $\varphi$ is a natural weak equivalence form the functor
	\[\ATop_0\xrightarrow{|\K|}\Top\]
	to the inclusion funtor
	\[\ATop_0\hookrightarrow\Top.\]
	
	In order to extend this result to the category of Alexandroff spaces, we will need the next theorem, which is 
	well known and can be found, for example, in \cite{quillen1973higher,raptis2010homotopy}. Since we have not been able to find a proof in the bibliography, we decided to include an elementary yet detailed proof of our own.
	
	Before stating the aforementioned result, we need to fix some notation and recall some basic facts about the \emph{classifying space of small categories} \cite{segal1968classifying}.
	
	For $n\in\N_0$ we will write $[n]$ for the poset $\{0,1,\ldots,n\}$ with the usual order and $\Delta^{n}$ for the standard topological $n$-simplex
	\[\Delta^{n}=\{(t_0,t_1,\ldots,t_n)\in I^{n+1}:\sum\limits_{k=0}^{n}t_k=1\}\] with the obvious subspace topology, where $I$ denotes the unit interval $[0,1]$ with the usual topology.
	
	Recall that the classifying space $B\C$ of a small category $\C$ is defined as the geometric realization of the simplicial nerve of $\C$. Explicitely, $B\C$ is the quotient $\overline{N\C}/\sim$ where
	\begin{itemize}
		\item $N\C$ denotes the simplicial nerve of $\C$, that is, the simplicial set defined by $N\C_n=\C^{[n]}$ for every $n\in\N_0$,
		\item the space $\overline{N\C}$ is defined as the coproduct
		\[\overline{N\C}=\coprod\limits_{n=0}^{\infty}N\C_n\times \Delta^{n},\] and
		\item The relation $\sim$ is the equivalence relation in $\overline{N\C}$ generated by 
		\[(d_i \omega,t)\sim (\omega,\delta_i t), \text{ for $\omega\in\C^{[n]}$ and $t\in\Delta^{n-1}$, with $n\in\N$ and $i\in\{0,\ldots,n\}$,}\]
		and
		\[(s_i \omega,t)\sim (\omega,\sigma_i t), \text{ for $\omega\in\C^{[n]}$ and $t\in\Delta^{n+1}$, with $n\in\N_0$ and $i\in\{0,\ldots,n\}$,}\]
		where $d_i$ and $\delta_i$ are the usual face maps and $s_i$ and $\sigma_i$ are the usual degeneracy maps.
	\end{itemize}
	We will write $[\omega,t]$ for the equivalence class of $(\omega,t)$ in $B\C$ for every $\omega\in\C^{[n]}$ and every $t\in\Delta^{n}$.
	
	The construction $B$ extends to a functor $B\colon\cat\to\Top$ by defining $BF\colon B\C\to B\Ds$ as $BF([\omega,t])=[F\circ \omega,t]$ for every functor $F\colon \C\to\Ds$ between small categories. Moreover, a natural transformation $\alpha\colon F\Rightarrow G\colon \C\to\Ds$ induces a homotopy $\alpha_*\colon BF\simeq BG$ \cite[Proposition 2.1]{segal1968classifying}.
	
	\begin{theo}\label{theo:B=|K|}
		The functors
		\[\pos\xrightarrow{\K}\scpx\xrightarrow{|\cdot|}\Top\]
		and
		\[\pos\hookrightarrow\cat\xrightarrow{B}\Top\] 
		are naturally isomorphic.
	\end{theo}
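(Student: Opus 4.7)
The plan is to exhibit, for each poset $P$, an explicit homeomorphism $\Phi_P\colon BP\to |\K(P)|$ and then verify naturality in $P$. The underlying idea is that in the nerve $NP$, the non-degenerate $n$-simplices are precisely the strict chains $p_0<p_1<\cdots<p_n$, and these are in bijection with the $n$-simplices of $\K(P)$. Thus $BP$ should be obtainable from $|\K(P)|$ by ``filling in'' degenerate chains, and the map $\Phi_P$ should collapse those degeneracies. Concretely, for $\omega\in P^{[n]}$ given by $p_0\leq p_1\leq\cdots\leq p_n$ and $t=(t_0,\ldots,t_n)\in\Delta^{n}$, define
\[
\Phi_P\bigl([\omega,t]\bigr)\,=\,\sum_{p\in P}\Bigl(\sum_{i:\,p_i=p}t_i\Bigr)\,p,
\]
interpreted as a point of the closed simplex of $|\K(P)|$ spanned by the underlying set $\{p_0,\ldots,p_n\}$ (which is a chain, hence a simplex of $\K(P)$).

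I would first check that $\Phi_P$ is well defined, i.e.\ constant on the equivalence classes generated by the face and degeneracy relations. For the face relations $(d_i\omega,t)\sim(\omega,\delta_it)$, both sides evidently yield the same formal sum, because inserting a zero into $t$ via $\delta_i$ simply omits the $i$-th term. For the degeneracy relations $(s_i\omega,t)\sim(\omega,\sigma_it)$, the coefficient redistribution induced by $\sigma_i$ exactly matches the coefficient merging produced by duplicating $p_i$ in $s_i\omega$. Next, for the inverse, given a point $x\in|\K(P)|$ with support the strict chain $q_0<q_1<\cdots<q_k$, write $x=\sum_{j=0}^k s_j q_j$ with $\sum s_j=1$, $s_j\geq 0$, and set $\Psi_P(x)=[\omega,s]$ where $\omega\colon [k]\to P$ is the strict chain $\omega(j)=q_j$. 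A direct check shows $\Phi_P\Psi_P=\id$ and $\Psi_P\Phi_P=\id$.

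The main obstacle is showing that $\Phi_P$ is a homeomorphism, since $BP$ carries the quotient topology of $\overline{NP}$ (a huge coproduct including all degenerate chains) while $|\K(P)|$ carries the coherent topology of its honest simplices. Continuity of $\Phi_P$ follows from the universal property of the quotient: restricted to each summand $NP_n\times\Delta^{n}$, the collapsing map lands in the finite union of closed simplices indexed by the image chain, and coefficient addition is continuous. For the continuity of $\Psi_P$, or equivalently to see that $\Phi_P$ is closed, one checks that every closed simplex $|\sigma|\subseteq|\K(P)|$ has preimage under $\Phi_P$ equal to the image in $BP$ of finitely many $NP_n\times\Delta^{n}$ under the characteristic maps of the corresponding (degenerate and non-degenerate) simplices with underlying set contained in $\sigma$, which is closed; so $\Phi_P$ is a continuous bijection between spaces of CW type whose inverse is continuous on each closed cell, hence a homeomorphism. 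Finally, naturality is a direct verification: for an order-preserving $f\colon P\to Q$, both $|\K(f)|\circ\Phi_P$ and $\Phi_Q\circ Bf$ send $[\omega,t]$ to $\sum_{q\in Q}\bigl(\sum_{i:\,f(p_i)=q}t_i\bigr)\,q$, completing the natural isomorphism $\Phi\colon B|_{\pos}\Rightarrow |\K(\cdot)|$.
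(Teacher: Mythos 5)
Your proposal is correct and follows essentially the same route as the paper: the same coefficient-summing map $[\omega,t]\mapsto\sum_k t_k\,\omega(k)$, the same compatibility checks with the face and degeneracy relations, reduction of each class to a non-degenerate chain with positive coordinates to get injectivity, and the same naturality computation. The only places you compress where the paper spells things out are the verification that $\Psi_P\Phi_P=\id$ (which is exactly the reduction-to-nondegenerate argument, cf.\ Milnor's lemma) and the closedness check, where ``finitely many $NP_n\times\Delta^{n}$'' should really be ``finitely many closed cells of $BP$'' since infinitely many degenerate simplices lie over a given chain, though their images coincide with those finitely many cells.
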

	\begin{proof}

		Let $X$ be a poset. In order to simplify the notation, we will denote the characteristic function $\chi_x\colon X\to I$  by $x$ for every $x\in X$. Given $t\in\Delta^{n}$, the $i$th coordinate of $t$ will be denoted by $t_i$ for $i=0,\ldots,n$.
		
		For each $m$--simplex $r$ of $\K(X)$, we will write $l_r$ for the composition
		\[\Delta^{m}\cong |r|\hookrightarrow |\K(X)|.\]
		Now, for each $n$--simplex $\omega$ of $N(X)$, the image of $\omega$ (when considered as an order preserving morphism from $[n]$ to $X$), is a simplex of $\K(X)$ that will be denoted by $r_\omega$. It is clear that if $m=\dim r_\omega$ then the factorization of $\omega$ through $r_\omega$ induces an order preserving morphism $[n]\to [m]$ that induces a continuous map 
		\[\omega_*\colon \Delta^{n}\to\Delta^{m}\] defined by 
		\[\omega_*(t)_k=\sum\limits_{\omega(j)=a_k}t_j\] for every $k=0,\ldots,m$, where $r_\omega=\{a_1,\ldots,a_m\}$ with $a_1<\ldots<a_m$. Note that $\omega_*=\id_{\Delta^{n}}$ if $\omega$ is a non-degenerated simplex.
		
		On the other hand, note that since $N(X)_n$ is a set which is regarded as a discrete topological space for each $n\in \N$, then the space $N(X)_n\times \Delta^{n}$ can be identified with the coproduct 
		\[\coprod\limits_{\omega\in N(X)_n}\Delta^{n}.\]
		It follows that the space $\overline{N(X)}$ is the coproduct
		\[\coprod\limits_{n\in\N_0}\coprod\limits_{\omega\in X^{[n]}}\Delta^{n}.\]
		
		Hence, the family of functions $\{l_{r_\omega}\omega_*:\text{$\omega$ is a simplex of $N(X)$}\}$ induces a continuous map
		\[f\colon \overline{N(X)}\to |\K(X)|\] clearly defined by $f(\omega,t)=\sum\limits_{k=0}^{n}t_k\omega(k)$ for each $(\omega,t)\in N(X)_n\times\Delta^{n}$, for each $n\in\N_0$. 
		
		It is not hard to see that $f$ is a quotient map. On the other hand, an explicit computation shows that 
		\[f(d_i\omega,t)=f(\omega,\delta_i t)\] for each $\omega\in N(X)_n$ and each $t\in\Delta^{n-1}$, for $n\in\N$ and $0\leq i\leq n$, and that 
		\[f(s_i\omega,t)=f(\omega,\sigma_i t)\] for each $\omega\in N(X)_n$ and each $t\in\Delta^{n+1}$, for $n\in\N_0$ and $0\leq i\leq n$.
		
		We wish to show that any two elements of $\overline{N(X)}$ that maps to the same function through $f$ are equivalent. Before, we will see that every $(\omega,t)$ of $\overline{N(X)}$ is equivalent to $(\overline{\omega},\overline{t})$ where $\overline{\omega}$ is the non-degenerated simplex that maps bijectively on the support of $f(\omega,t)$ and 
		\[\overline{t}_j=f(\omega,t)(\overline{\omega}(j))=\sum\limits_{\omega(k)=\overline{\omega}(j)}t_k\]
		for every $j=0,\ldots,\dim \overline{\omega}$ (cf. \cite[Lemma 3]{milnor1957geometric}).
		
		First, note that if $\omega$ is degenerated, then there exists a (unique) non-degenerated simplex $\omega'$ and degenerations $s_{i_1},\ldots,s_{i_p}$ such that $\omega=s_{i_p}\ldots s_{i_1}\omega'$. Letting $t'=\sigma_{i_1}\ldots \sigma_{i_p}t$, it is clear that
		\[(\omega,t)\sim(\omega',t').\]
		If $\omega$ is non-degenerated, we let $\omega'=\omega$ and $t'=t$. Since $(\omega,t)\sim(\omega',t')$, it follows that $f(\omega,t)=f(\omega',t')$ and hence 
		\[t'_j=f(\omega',t')(\omega'(j))=f(\omega,t)(\omega'(j))=\sum\limits_{\omega(k)=\omega'(j)}t_k\] for $j\in[\dim \omega']$.
		
		Now, if one or more coordinates of $t'$ are equal to zero, then there exist $q\in\N_0$, $\overline{t}\in\Delta^{q}$ and face operators $\delta_{j_1},\ldots,\delta_{j_{p'}}$ such that $\overline{t}$ has all its coordinates different from zero and $t'=\delta_{j_{p'}}\ldots\delta_{j_1}\overline{t}$. Letting $\overline{\omega}=d_{j_1}\ldots d_{j_{p'}}\omega'$, we see that
		\[(\omega',t')\sim (\overline{\omega},\overline{t}).\]
		If every coordinate of $t'$ if different from zero, we let $\overline{\omega}=\omega'$ and $\overline{t}=t'$.
		
		Note that, since $X$ is a poset and $\omega'$ is non-degenerated, $\overline{\omega}$ is non-degenerated as well and for each $j\in [\dim\overline{\omega}]$ there exists a unique $k\in [\dim \omega']$ such that $\omega'(k)=\overline{\omega}(j)$. In this case, we obtain that
		\[\overline{t}_j=f(\overline{\omega},\overline{t})(\overline{\omega}(j))=f(\omega',t')(\omega'(k))=t'_k.\]
		
		It is clear that $(\omega,t)\sim (\overline{\omega},\overline{t})$. It can be easily seen that the image of $\overline{\omega}$ is the support of $f(\omega,t)$ and that 
		\[\overline{t}_j=f(\overline{\omega},\overline{t})(\overline{\omega}(j))=f(\omega,t)(\overline{\omega}(j))=\sum\limits_{\omega(h)=\overline{\omega}(j)}t_h\]
		for each $j=0,\ldots,\dim \overline{\omega}$ as we wished to show.
		
		Now, let $(\omega,t)$ and $(\omega',t')$ elements of $\overline{N(X)}$ such that $f(\omega,t)=f(\omega',t')$.
		Then 
		\[\sum\limits_{h=0}^{\dim\omega}t_h\omega(h)=\sum\limits_{j=0}^{\dim\omega'}t'_j\omega'(j).\]
		It is clear that \[\im\overline{\omega}=\sop f(\omega,t)=\sop f(\omega',t')=\im\overline{\omega'}\] and that 
		\[\overline{t}_j=f(\omega,t)(\overline{\omega}(j))=f(\omega',t')(\overline{\omega'}(j))=\overline{t'}_j\] whenever $0\leq j\leq\dim\overline{\omega}$. 
		Thus,
		\[(\omega,t)\sim (\overline{\omega},\overline{t})=(\overline{\omega'},\overline{t'})\sim (\omega',t').\]
		Hence, the function \[f_X\colon BX=\left(\overline{N(X)}/\sim\right)\to |\K(X)|\] defined by $f_X([\omega,t])=f(\omega,t)$ for each $[\omega,t]\in BX$ is a homeomorphism.
		
		Moreover, given posets $X$ and $Y$ and an order preserving morphism $g\colon X\to Y$, an explicit computation shows that 
		\[(|\K(g)|\circ f_X)([\omega,t])(y)=\sum\limits_{k\in(g\omega)^{-1}(y)}t_k=(f_Y\circ Bg)([\omega,t])(y)\]
		for every $[\omega,t]\in BX$ and every $y\in Y$. It follows that
		\[|\K(g)|f_X=f_Y Bg.\]
		
		The result follows.
	\end{proof}
	\begin{definition}
		\label{def:varphi_X}
		For every $t\in \Delta^{n}$ we write $m_t$ for the minimum of the set 
		\[\{i\in\{0,\ldots,n\}:t_i\neq 0\}.\]
		
		Let $X$ be an Alexandroff space. 
		We define a function $\varphi_X\colon BX\to X$ by $\varphi_X([\omega,t])=\omega(m_t)$ for every $[\omega,t]\in BX$.
		It is not hard to see that $\varphi_X$ is well defined.
		
		Moreover, it is easy to see that $f\varphi_X=\varphi_Y Bf$ for every continuous map $f\colon X\to Y$ between Alexandroff spaces $X$ and $Y$.
		
		Note that if $X$ is an Alexandroff T$_0$ space, then $\varphi_X$ is nothing but the weak equivalence obtained by combining Theorem \ref{theo:B=|K|} and \cite[Theorem 1]{mccord1966singular}.
	\end{definition}

	\begin{theo}\label{theo:varphi_X}
		The collection of maps $\{\varphi_X:X\text{ is an Alexandroff space}\}$ defines a natural weak equivalence $\varphi$ from the funtor
		\[
		\ATop\cong\ord\hookrightarrow\cat\xrightarrow{B}\Top
		\]
		to the inclusion functor
		\[
		\ATop\hookrightarrow\Top.
		\]
	\end{theo}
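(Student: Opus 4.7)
Naturality of $\varphi$ was already verified in Definition~\ref{def:varphi_X}, so the only remaining task is to show that each individual $\varphi_X$ is a weak equivalence. The T$_0$ case is handled by combining Theorem~\ref{theo:B=|K|} with \cite[Theorem~1]{mccord1966singular}, as noted at the end of Definition~\ref{def:varphi_X}, so my strategy will be to reduce the general Alexandroff case to the T$_0$ case via the Kolmogorov quotient.

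Write $q\colon X\to X_0$ for the canonical quotient by the equivalence relation $x\sim y$ iff $x\leq y\leq x$ (equivalently $U_x=U_y$); then $X_0$ is a T$_0$ Alexandroff space, $q$ is order-preserving and continuous, and a choice of representative in each class yields an order-preserving section $s\colon X_0\to X$. One has $q\circ s=\id_{X_0}$ on the nose, while $s\circ q$ agrees with $\id_X$ only up to the specialization preorder: $s(q(x))\leq x$ and $x\leq s(q(x))$ for every $x\in X$. From these pointwise inequalities I will conclude both that $q$ is a weak equivalence in $\Top$ and that $Bq$ is a weak equivalence. For the former I will invoke the standard Alexandroff fact that continuous maps $f,g\colon Y\to X$ with $f(y)\leq g(y)$ are homotopic, via the explicit homotopy that equals $g$ at $t=0$ and $f$ for $t>0$ (continuity uses that open sets of $X$ are lower sets and that $(0,1]$ is open in $I$); applied to $s\circ q$ and $\id_X$ it exhibits $q$ as a homotopy equivalence. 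For the latter, the same inequalities constitute natural transformations between $s\circ q$ and $\id_X$ as functors of thin categories, and $B$ carries these to a homotopy $B(s\circ q)\simeq\id_{BX}$ by \cite[Proposition~2.1]{segal1968classifying}.

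To finish, I will feed this into the naturality square $q\circ\varphi_X=\varphi_{X_0}\circ Bq$ (a direct consequence of Definition~\ref{def:varphi_X}): three of its four arrows ($q$, $Bq$ and $\varphi_{X_0}$) are weak equivalences, so two-out-of-three forces $\varphi_X$ to be a weak equivalence as well. The only step that requires real attention is the verification of continuity for the topological step-function homotopy between $s\circ q$ and $\id_X$; the rest of the argument is essentially formal, assembling results already recorded in the excerpt.
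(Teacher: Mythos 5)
Your proposal follows essentially the same route as the paper: reduce to the T$_0$ case via the Kolmogorov quotient $q\colon X\to X_0$, show that both $q$ and $Bq$ are (weak) homotopy equivalences using the section $s$ and the pointwise comparability of $s\circ q$ with $\id_X$, and conclude by two-out-of-three applied to the square $q\circ\varphi_X=\varphi_{X_0}\circ Bq$. The only stylistic difference is that you prove the homotopy equivalence of $q$ directly with the step-function homotopy, where the paper cites McCord's Lemma 9 and instead emphasizes that $q$ is an equivalence of thin categories, so that $Bq$ is a homotopy equivalence by Segal's result; both are correct. One small point you gloss over, which the paper's proof treats explicitly: for a non-T$_0$ space $X$ the continuity of $\varphi_X$ itself has not yet been established (Definition \ref{def:varphi_X} only checks that $\varphi_X$ is well defined as a function, and the T$_0$ identification with McCord's map does not apply), and continuity is needed before one can even speak of $\varphi_X$ being a weak equivalence or apply two-out-of-three. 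Your own square supplies the missing argument: $q\circ\varphi_X=\varphi_{X_0}\circ Bq$ is continuous, and since the open sets of $X$ are exactly the $q$-preimages of open sets of $X_0$, continuity of $\varphi_X$ follows.
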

	\begin{proof}

		From Theorem \ref{theo:B=|K|} and \cite[Theorem 1]{mccord1966singular} we know that $\varphi_X$ is a weak homotopy equivalence for every Alexandroff T$_0$ space.
		We need to show that $\varphi_X$ is a weak homotopy equivalence for every Alexandroff (not necessarily T$_0$) space $X$.
		Let $X$ be an Alexandroff space, let $X_0$ be its Kolmogorov quotient\footnote{The Kolmogorov quotient of a topological space $X$ is the quotient $X/\sim$ where $\sim$ is the equivalence relation that identifies topologically indistinguishable points (points that have the exact same heighbourhoods). The space $X/\sim$ is a T$_0$ space and if $X$ is a T$_0$ space, then the quotient map $q_X\colon X\to X/\sim$ is a homeomorphism.}, let $q_X\colon X\to X_0$ be the associated quotient map and let $i_X$ be any section of $q_X$. It is straightforward to see that $q_X$ is an equivalence of categories\footnote{In fact, $X_0$ is nothing but the skeleton of $X$ which is constructed by identifying the isomorphic objects of $X$, and $q_X$ and $i_X$ are the usual ``quotient'' and ``inclusion'' functors, respectively.} with inverse $i_X$. It follows that $Bq_X$ is a homotopy equivalence\footnote{
		M. McCord shows in \cite[Lemma 9]{mccord1966singular} that $q_X$ is a homotopy equivalence in the usual (topological) sense.}.
	
		Note that $q_X\varphi_X$ is continuous since it is equal to $\varphi_{X_0}Bq_X$. Continuity of $\varphi_X$ follows easily.
		
		By the 2-out-of-3 property of weak homotopy equivalences, it follows that $\varphi_X$ is a weak homotopy equivalence as we wanted to show.
	\end{proof}
	\begin{coro}
		Let $f\colon X\to Y$ be a continuous map between Alexandroff spaces. Then, $f$ is a weak homotopy equivalence if and only if $Bf\colon BX\to BY$ is an homotopy equivalence.
	\end{coro}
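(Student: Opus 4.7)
The plan is to deduce the corollary from Theorem \ref{theo:varphi_X} by invoking naturality together with the two-out-of-three property of weak homotopy equivalences and Whitehead's theorem.

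First I would unpack what Theorem \ref{theo:varphi_X} gives for the specific map $f\colon X\to Y$: by naturality of $\varphi$, the square
\[
\begin{array}{ccc}
BX & \xrightarrow{Bf} & BY \\[2pt]
{\scriptstyle\varphi_X}\downarrow & & \downarrow{\scriptstyle\varphi_Y} \\[2pt]
X & \xrightarrow{\ f\ } & Y
\end{array}
\]
commutes, and both vertical arrows $\varphi_X$ and $\varphi_Y$ are weak homotopy equivalences. This single diagram is the engine of both implications.

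For the easier implication, assume $Bf$ is a homotopy equivalence. Then $Bf$ is in particular a weak homotopy equivalence, so by the two-out-of-three property applied to the identity $f\circ\varphi_X=\varphi_Y\circ Bf$, the map $f$ is a weak homotopy equivalence. Conversely, if $f$ is a weak homotopy equivalence, the same two-out-of-three argument shows that $Bf$ is a weak homotopy equivalence. To upgrade this to a genuine homotopy equivalence I would invoke Whitehead's theorem: the classifying space of any small category is the geometric realization of a simplicial set (its nerve) and is therefore a CW complex, so $BX$ and $BY$ are CW complexes, and a weak equivalence between CW complexes is a homotopy equivalence.

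The only step that is not essentially formal is the use of Whitehead's theorem, so the proof writes itself once one records that $BX$ and $BY$ have the CW structure inherited from the nerve. There is no real obstacle beyond citing these standard facts; the whole argument is a two-line application of the naturality square from Theorem \ref{theo:varphi_X}.
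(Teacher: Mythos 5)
Your proof is correct and is essentially identical to the paper's: both use the naturality square from Theorem \ref{theo:varphi_X} together with the two-out-of-three property to pass between $f$ and $Bf$, and then apply Whitehead's theorem using that $BX$ and $BY$ are CW complexes.
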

	\begin{proof}
		From Theorem \ref{theo:varphi_X} and the 2-out-of-3 property of weak homotopy equivalences it follows that the map $f$ is a weak homotopy equivalence if and only if $Bf$ is a weak homotopy equivalence. Since $BX$ and $BY$ are CW-complexes, the result follows immediately from Whitehead's Theorem.
	\end{proof}
	\begin{rem}
		It is well known that Whitehead's Theorem does not hold in the context of Alexandroff spaces, that is, that there exist weak homotopy equivalences between Alexandroff spaces that are not homotopy equivalences. In fact, there exist weakly contractible\footnote{A topological space is weakly contractible if all its homotopy groups are trivial.} finite topological  spaces which are not contractible. As we show in \cite{cianci2016smallest}, the minimum cardinality of such a space is 9, and there exist, up to homeomorphism, two weakly contractible non-contractible spaces of 9 points.
	\end{rem}

	\subsection{Groupoids and localization}
	Recall that for every small category $\C$ there exists a category $\loc\C$, which will be called a \emph{localization of $\C$}, and a functor $\iota_{\C}\colon \C\to\loc\C$ such that:
	\begin{enumerate}
		\item $\iota_\C$ is a morphism-inverting functor (that is, $\iota_\C(f)$ is an isomorphism for every arrow $f$ in $\C$) and,
		\item for every morphism-inverting functor $F\colon \C\to \Ds$, there exists a unique functor $\overline{F}\colon \loc\C\to\Ds$ such that $F=\overline{F}\iota_{\C}$.
	\end{enumerate}
  The principal reference for localization of categories is \cite{gabriel1967calculus}.
	
	It is not hard to see that localizations of $\C$ are unique up to a unique canonical isomorphism for every small category $\C$, and thus it is usual to consider any localization of $\C$ as ``the'' localization of $\C$. 
	
	\begin{ex}
		Let $\G$ be a grupoid. It is clear that $\G$ together with the functor $\id_\G$ is a localization of $\G$. Thus, we may always assume that $\loc\G=\G$ and that $\iota_{\G}=\id_{\G}$.
	\end{ex}
	
	For a small category $\C$, we can define a localization $\loc\C$ of $\C$ as follows. 
	First, we consider the set $\Omega=\mor(\C)\sqcup\mor(\C)$ and the canonical inclusions $i_0,i_1\colon \mor\C\to\Omega$. The source and target maps $s,t\colon \mor(\C)\to\obj(\C)$ determine unique maps $S,T\colon\Omega\to\obj(\C)$ such that $Si_0=s$, $Si_1=t$, $Ti_0=t$ and $Ti_1=s$. We will tacitly identify $\mor(\C)$ with its image through $i_0$ and hence we will denote $i_0(f)$ by $f$ for every $f\in\mor(\C)$. Under this identification, we have that $\Omega=\mor(\C)\cup \mor(\C)^{-1}$, where $\mor(\C)^{-1}=i_1(\mor(\C))$. The map $i_1(f)$ will be denoted by $\overline{f}$ for every $f\in\mor(\C)$.
	
	Let $c,c'\in\obj(\C)$. By a \emph{path in $\C$ from $c$ to $c'$} we will mean a finite sequence $(c_0,f_0,c_1\ldots,c_{n},f_n,c_{n+1})$ where 
	\begin{itemize}
		\item $c_i\in \obj(\C)$ for every $i=0,\ldots,n+1$,
		\item $f_i\in\Omega$ for every $i=0,\ldots,n$,
		\item $c_i=S(f_i)$ and $c_{i+1}=T(f_i)$ for every $i=0,\ldots,n$,
		\item $c_0=c$, and
		\item $c_{n+1}=c'$.
	\end{itemize}
	We will explicitly allow $n$ to be $-1$ in our definition and thus the sequence $(c)$ will be a path from $c$ to itself.
	The path $(c_0,f_0,c_1\ldots,c_{n},f_n,c_{n+1})$ will be denoted simply by $(f_0,\ldots,f_n)$ while the path $(c)$ will be denoted by $()_c$.
	
	Now, we can define a category $F(\C)$ with the same objects as $\C$ by letting $\Hom_{F(\C)}(c,c')$ be the set of paths in $\C$ from $c$ to $c'$, for $c,c'\in\obj(\C)$. The composition in $F(\C)$ is defined by concatenation, that is, the path $()_c$ is the identity at $c$ for every $c\in \obj(\C)$ and
	\[(f_0,\ldots,f_n)(g_0,\ldots,g_m)=(g_0,\ldots,g_m,f_0,\ldots,f_n)\] for paths $(f_0,\ldots,f_n),(g_0,\ldots,g_m)$ in $\C$ such that $T(g_m)=S(f_0)$.
	
	Finally, we consider the equivalence relation $\sim_\C$ in (the set of arrows of) $F(\C)$ generated by the relations
	\begin{itemize}
		\item $(f,g)=(gf)$ for every pair of arrows $f,g\in\mor(\C)$ such that $t(f)=s(g)$,
		\item $(\id_{c})=()_{c}$ for every object $c\in\obj(\C)$, and
		\item $(f,\overline{f})=()_{s(f)}$ and $(\overline{f},f)=()_{t(f)}$ for every $f\in\mor(\C)$.
	\end{itemize}
	The category $\loc\C$ is defined as the quotient $F(\C)/\sim_\C$ of $F(\C)$ by this equivalence relation. The functor $\iota_{\C}\colon\C\to\loc\C$ is the obvious functor that is the identity on $\obj(\C)=\obj(\loc\C)$ and maps every arrow $f\in\mor(\C)$ to the equivalence class $[(f)]$ of the path $(f)$ in $F(\C)$.

	For any functor $F\colon\C\to \Ds$ between small categories there exists a unique functor $\loc F\colon\loc\C\to\loc\Ds$, which is clearly a groupoid homomorphism, such that $\loc F \iota_\C=\iota_\Ds F$. The assignment $F\mapsto \loc F$ respects identities and compositions, and hence, ``localization'' defines a functor $\loc\colon\cat\to\grpd$, provided a choice of a localization is given (or constructed) for every small category.

	\begin{definition}\label{def:tree}
		Let $T$ be a small category. We say that $T$ is a \emph{tree} if $\loc T$ is an indiscrete category, that is, if there exists exactly one arrow in $\loc T$ from $x$ to $y$, for every $x,y\in \obj(T)$.
	
		A small category $\C$ will be called a \emph{forest} if it is a disjoint union of trees. In other words, $\C$ is a forest if its connected components are trees.
	\end{definition}
	\begin{definition}
		Let $\C$ be a connected small category. A wide subcategory $T$ of $\C$ which is a tree will be called a \emph{maximal tree (of $\C$)}. 
	\end{definition}
	\begin{rem}
		Let $\C$ be a small category. It is not hard to see that there might exist two different maximal trees $T$ and $T'$ in $\C$ with $T$ a proper subcategory of $T'$. In this case, the inclusion $i\colon T\hookrightarrow T'$ induces an isomorphism (of grupoids) $\loc i\colon \loc T\cong \loc T'$.
	\end{rem}
	\begin{lemma}\label{lemm:tree}
		Let $\C$ be a small category, let $T_0$ and $T_1$ be two disjoint trees of $C$ and let $B$ be the disjoint union of $T_0$ and $T_1$. Assume that there exists an arrow $f\colon c_0\to c_1$ in $\C$ such that $c_0\in\obj(T_0)$ and $c_1\in\obj(T_1)$. 
		
		Consider the subcategory $T$ of $\C$ for which $\obj(T)=\obj(T_0)\cup \obj(T_1)$ and for which the set $T(c,c')$ is equal to the set
		\begin{itemize}
			\item ${T_0}(c,c')$ if $c,c'\in\obj(T_0)$,
			\item ${T_1}(c,c')$ if $c,c'\in\obj(T_1)$,
			\item $\{\beta f \alpha:\alpha\in {T_0}(c,c_0),\beta\in {T_1}(c_1,c')\}$ if $c\in\obj(T_0)$ and $c'\in\obj(T_1)$, and
			\item	$\varnothing$ if $c\in\obj(T_1)$ and $c'\in\obj(T_0)$,
		\end{itemize}
		for every $c,c'\in\obj(T)$.
		Then, $T$ is a tree. 
	\end{lemma}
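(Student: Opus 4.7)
The plan is to prove that $\loc T$ is (isomorphic to) the indiscrete groupoid $J$ on $\obj(T)$. Since $J$ is a groupoid and there is a unique object-preserving functor $T\to J$, the universal property of localization produces a functor $\bar P\colon \loc T\to J$ that is the identity on objects and (because $J$ is indiscrete) automatically surjective on hom-sets. Thus, showing that $T$ is a tree reduces to producing a two-sided inverse $\psi\colon J\to\loc T$ of $\bar P$.

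To construct $\psi$, I observe that the composites $T_k\hookrightarrow T\xrightarrow{\iota_T}\loc T$ ($k=0,1$) are morphism-inverting, hence factor uniquely through functors $j_k\colon\loc T_k\to\loc T$. Since $T_0$ and $T_1$ are trees, each $\loc T_k$ is the indiscrete groupoid on $\obj(T_k)$; write $\tau^k_{x,y}\colon x\to y$ for its unique arrow. I then define $\psi$ on the unique arrow $x\to y$ of $J$ by
\[
\psi(x\to y)=\begin{cases}
j_k(\tau^k_{x,y}) & \text{if }x,y\in\obj(T_k),\\
j_1(\tau^1_{c_1,y})\,\iota_T(f)\,j_0(\tau^0_{x,c_0}) & \text{if }x\in\obj(T_0),\ y\in\obj(T_1),\\
j_0(\tau^0_{c_0,y})\,\iota_T(f)^{-1}\,j_1(\tau^1_{x,c_1}) & \text{if }x\in\obj(T_1),\ y\in\obj(T_0),
\end{cases}
\]
which is well-defined in $\loc T$ since $\iota_T(f)$ is invertible there.

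Checking that $\psi$ is a functor reduces to verifying $\psi(y\to z)\psi(x\to y)=\psi(x\to z)$; this splits into eight cases according to where $x,y,z$ lie. The substantive cases are those in which $y$ lies in the tree opposite to $\{x,z\}$: there the middle of the composite produces a factor $\iota_T(f)\iota_T(f)^{-1}$ or $\iota_T(f)^{-1}\iota_T(f)$ that cancels to an identity, after which indiscreteness of the relevant $\loc T_k$ collapses the remainder to $\psi(x\to z)$; the other cases follow immediately from indiscreteness of $\loc T_k$. Once $\psi$ is functorial, $\bar P\psi=\id_J$ is obvious, and $\psi\bar P=\id_{\loc T}$ is verified on the generating arrows $\iota_T(g)$ using the three possible forms of $g\in\mor(T)$ described in the lemma. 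The main obstacle is the systematic case analysis for the functoriality of $\psi$; each case is short, but the cross-bridge cancellations must be tracked carefully.
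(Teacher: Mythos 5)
Your proof is correct, but it takes a genuinely different route from the paper's. The paper works entirely inside the explicit model $\loc T = F(T)/\sim_T$ constructed earlier in the section: since $T$ is connected it suffices to show $\aut_{\loc T}(c_0)$ is trivial, and an arbitrary loop at $c_0$ is normalized by hand --- the factorization $\beta f\alpha$ of every cross-arrow is used to arrange that the only crossings between $T_0$ and $T_1$ occurring in the word are $f$ and $\overline{f}$, the tree property of $T_0$ and $T_1$ collapses each segment between consecutive crossings to an identity, and the loop reduces to $(f,\overline{f},\dots,f,\overline{f})$ and hence to the empty path. You avoid manipulating path representatives altogether: you assemble an explicit inverse $\psi$ to the comparison functor $\bar P\colon\loc T\to J$ out of $\loc T_0$, $\loc T_1$ and the single invertible arrow $\iota_T(f)$, and the two delicate points --- functoriality of $\psi$ via the eightfold case analysis with the $\iota_T(f)^{\pm 1}$ cancellations, and $\psi\bar P=\id_{\loc T}$, which follows from $\psi\bar P\,\iota_T=\iota_T$ together with the uniqueness clause of the universal property --- both check out. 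Your version is presentation-independent and in effect shows that the amalgam of two indiscrete groupoids joined by one arrow is indiscrete, at the price of the systematic case check; the paper's version stays concrete but must track equivalence classes of words carefully. Both arguments ultimately rest on the same two inputs: every arrow of $T$ from $\obj(T_0)$ to $\obj(T_1)$ factors as $\beta f\alpha$, and $\loc T_0$, $\loc T_1$ are indiscrete.
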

	\begin{proof}
		We wish to show that $\loc T$ is an indiscrete category. 
		We can assume that $\loc T$ is the category $F(T)/\sim_T$ defined at the beginning of the present subsection.

		Since $T$ is connected, $\loc T$ is also connected. Hence it is enough to show that $\aut_{\loc T}(c_0)$ is the trivial group.
		Let $h\in\aut_{\loc T}(c_0)$ and let $h_0,\ldots,h_n$ in $\mor(T)\cup\mor(T)^{-1}$ such that $h=[(h_0,\ldots,h_n)]$. Note that, since any arrow in $T$ from an object $c$ of $T_0$ to an object $c'$ of $T_1$ factors as $\beta f \alpha$ for some $\alpha\in\mor(T_0)$ and some $\beta\in\mor(T_1)$, we can assume that $h_i=f$ for every $i$ such that $S(h_i)\in\obj(T_0)$ and $T(h_i)\in\obj(T_1)$ and that $h_i=\overline{f}$ for every $i$ such that $S(h_i)\in\obj(T_1)$ and $T(h_i)\in\obj(T_0)$.
		Furthermore, there is no loss of generality if we assume that $h_0=f$ and $h_n=\overline{f}$. 
		
		Under this assumptions, we have the followings obvious facts:
		\begin{enumerate}
			\item There exists a unique increasing sequence $0=i_0<\cdots<i_k=n$ such that $h_j\not\in\{f,\overline{f}\}$ whenever $j\not\in\{i_0,\ldots,i_k\}$ and 
			\[
			h_{i_l}=
				\begin{cases}
					f&\text{ if $l$ is even, and}\\
					\overline{f}&\text{ if $l$ is odd.}
				\end{cases}
			\]
			\item The inclusion $\mor(T_0)\subseteq\mor(T)$ induces an obvious identification  of $\mor(T_0)\cup\mor(T_0)^{-1}$ with a subset of $\mor(T)\cup\mor(T)^{-1}$. Under this identification, and since $T_0$ is a tree, every path $(h_i,\ldots,h_{k})$ such that $h_j\neq f$ for $i\leq j\leq k$ and $S(h_i)=T(h_{k})=c_0$ represents the identity $\id_{c_0}$ in $\loc T_0$. And since $\sim_{T_0}\subseteq \sim_{T}$ under the same identification, such a path represents the identity $\id_{c_0}$ in $\loc T$ as well.
			\item Similarly, every path $(h_i,\ldots,h_{k})$ such that $h_j\neq \overline{f}$ for $i\leq j\leq k$ and $S(h_i)=T(h_{k})=c_1$ represents the identity $\id_{c_1}$ in $\loc T$.
		\end{enumerate}
		Now, (3) shows that the path $(h_{i_{2k}},h_{i_{2k}+1},\ldots,h_{i_{2k+1}})$ represents the identity of $c_0$ in $\loc T$ and hence it is equivalent to the path $(h_{i_{2k}},h_{i_{2k+1}})=(f,\overline{f})$ for every $k$. Similarly, (2) shows that the path $(h_{i_{2k+1}},h_{i_{2k+1}+1},\ldots,h_{i_{2k+2}})$ is equivalent to the path $(\overline{f},f)$ for every $k$. By an inductive argument we obtain that the path $(h_0,\ldots,h_n)$ is equivalent to the path $(h_{i_0},\ldots,h_{i_k})=(f,\overline{f},\ldots,f,\overline{f})$ which is in turn equivalent to the empty path in $c_0$. Hence $h$ is the identity $\id_{c_0}$ in $\loc T$. The result follows. 
	\end{proof}
	\begin{prop}\label{prop:forest}
		 Let $\C$ be a connected small category and let $B$ be a subcategory of $\C$ which is a forest. Then $B$ can be extended to a maximal tree $T$ of $\C$.
	\end{prop}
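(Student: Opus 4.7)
My plan is to apply Zorn's Lemma to the poset of forest-subcategories of $\C$ that contain $B$, and then to use Lemma \ref{lemm:tree} together with the connectedness of $\C$ to upgrade a maximal element into a tree.

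First I would reduce to the case where $B$ is wide. For each object $c \in \obj(\C) \setminus \obj(B)$, adjoining $c$ together with its identity $\id_c$ to $B$ produces a new subcategory in which $\{c\}$ is an additional singleton tree component, disjoint from the existing components of $B$; the enlarged subcategory is still a forest and still contains $B$. Performing this for every missing object yields a wide forest containing $B$, so I may assume $\obj(B) = \obj(\C)$ from the outset.

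Next I would let $\mathcal{P}$ be the poset of subcategories $F$ of $\C$ with $B \subseteq F$ and $F$ a forest, ordered by inclusion. The main technical step is checking that every chain $\{F_\lambda\}_{\lambda \in \Lambda}$ in $\mathcal{P}$ has an upper bound: the union $F = \bigcup_\lambda F_\lambda$ is clearly a subcategory of $\C$ containing $B$, and one must argue that each connected component $K$ of $F$ has indiscrete localization. Given $c, c' \in K$, any two parallel paths in $F$ representing arrows of $\loc K$ use only finitely many arrows of $F$, so both lie in a common $F_\lambda$; since the two paths zigzag between $c$ and $c'$, both endpoints sit in the same connected component $K_\lambda$ of $F_\lambda$, which is a tree. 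Hence the two paths already represent the same arrow of $\loc K_\lambda$, and the functor $\loc K_\lambda \to \loc K$ induced by the inclusion $K_\lambda \hookrightarrow K$ shows that they also coincide in $\loc K$. I expect this verification to be the most delicate point.

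Finally, Zorn's Lemma provides a maximal element $T \in \mathcal{P}$. I would conclude by showing $T$ is connected, which makes it a wide tree, i.e.\ a maximal tree containing $B$. If $T$ were disconnected, one could fix two distinct components $T_0, T_1$ of $T$ and, using the connectedness of $\C$ and the wideness of $T$, extract a zigzag in $\C$ linking an object of $T_0$ to an object of $T_1$; as we traverse the zigzag, the component of $T$ containing the current vertex must change at some step, producing an arrow $f \colon c_0 \to c_1$ in $\C$ (or its formal reverse) with $c_0, c_1$ in distinct components of $T$. Lemma \ref{lemm:tree} then amalgamates these two components through $f$ into a single tree; replacing the two original components inside $T$ by this new tree yields a strictly larger forest in $\mathcal{P}$ (it now contains $f$), contradicting the maximality of $T$.
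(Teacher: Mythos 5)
Your proposal is correct and follows essentially the same route as the paper: reduce to $B$ wide, apply Zorn's Lemma to the poset of (wide) forests containing $B$, and use Lemma \ref{lemm:tree} together with the connectedness of $\C$ to show a maximal element must be connected. The only difference is that you spell out the chain-upper-bound verification and the existence of the bridging arrow $f$, which the paper leaves as ``straightforward''; your filled-in details are sound.
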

	\begin{proof}
		Without loss of generality, we can assume that $B$ is a wide subcategory of $\C$.
		The set $E$ of (wide) forests of $\C$ that contain $B$ is partially ordered by the relation ``...is a subcategory of...'', or in other words, by the inclusion of subcategories of $\C$. It is straightforward to show that any chain in $E$ has an upper bound in $E$ and thus, it follows from Zorn's lemma that there exists a maximal element $T$ in $E$.
		
		We wish to show that $T$ is connected. Assume the contrary. Since $\C$ is connected and $T$ is wide in $\C$, it is easy to see that there exists and arrow $f$ in $\C$ such that the source and target of $f$ belong to different connected components of $T$. By \ref{lemm:tree}, the forest $T$ can be extended to a forest $T'$ that contains $f$. This fact clearly contradicts the maximality of $T$ in $E$.  
		It follows that $T$ must be connected and thus, that $T$ is a maximal tree of $\C$.
	\end{proof}

	\begin{definition}[{\cite[p.85]{higgins2005categories}}]
		Let $\G$ be a groupoid and let $\n$ be a subgroupoid of $\G$. We say that $\n$ is a \emph{normal} subgroupoid of $\G$ if:
		\begin{enumerate}
			\item $\n$ contains all the identity maps of $\G$ (equivalently, $\n$ is a wide subcategory of $\G$), and
			\item $g^{-1}\aut_{\n}(y)g=\aut_{\n}(x)$ for every $x,y\in \obj(\G)$ and every $g\in\Hom_{\G}(x,y)$.
		\end{enumerate}
		
		Assume that $\n$ is a normal subgroupoid of $\G$. It is clear that $\n$ induces an equivalence relation $\sim$ in $\obj(\G)$ defined as
		\[x\sim y \text{ if and only if $x$ and $y$ are in the same connected component of $\n$.}\]
		The quotient $\obj(\G)/\sim$ is (canonically isomorphic to) the set of connected components of $\n$.
		
		Similarly, $\n$ induces an equivalence relation $\sim$ in $\mor(\G)$ which is defined as
		\[\alpha\sim\beta \text{ if and only if there exist $n_1,n_2\in\mor(\n)$ such that $\alpha=n_1 \beta n_2$}\]
		for $\alpha,\beta\in\mor(\G)$.
		
		We can define a groupoid $\G/\n$ whose set of objects is the quotient $\obj(\G)/\sim$ and whose set of arrows is the quotient $\mor(\G)/\sim$ in the obvious way. The groupoid $\G/\n$ is called the \emph{quotient groupoid of $\G$ by $\n$}. The canonical morphism of groupoids $q\colon \G\to \G/\n$ is called \emph{quotient map of groupoids}.
		
		We will write $[x]$ for the equivalence class of an object $x$ of $\G$ in $\G/\n$. Similarly, we will write $[\alpha]$ for the equivalence class of an arrow $\alpha$ of $\G$ in $\G/\n$.
	\end{definition}
	\begin{prop}[{\cite[Prop 24]{higgins2005categories}}]
		Let $\G$ be a groupoid, let $\n$ be a normal subgroupoid of $\G$ and let $q\colon\G\to\n$ be the canonical quotient map. Let $\mathcal{H}$ be a groupoid and let $F\colon \G\to\mathcal{H}$ be a morphism of groupoids which is trivial in $\n$, that is, $F$ maps every arrow in $\n$ to an identity. Then there exists a unique morphism of groupoids $\overline{F}\colon \G/\n\to \mathcal{H}$ such that $F=\overline{F}q$.
	\end{prop}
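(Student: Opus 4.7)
The plan is to define $\overline{F}$ by the only formulas compatible with $F=\overline{F}q$, namely $\overline{F}([x]):=F(x)$ on objects and $\overline{F}([\alpha]):=F(\alpha)$ on arrows, and then to verify that these formulas descend to the quotient and assemble into a morphism of groupoids. Uniqueness will be immediate from this setup, because any $\overline{F}$ satisfying $\overline{F}q=F$ must act on representatives as $F$ does, and every object and every arrow of $\G/\n$ is of the form $q(x)=[x]$, respectively $q(\alpha)=[\alpha]$.

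The first point to check is well-definedness on objects. If $[x]=[y]$ then $x$ and $y$ lie in the same connected component of $\n$, so there is an arrow $n\colon x\to y$ in $\n$. Triviality of $F$ on $\n$ says that $F(n)$ is an identity in $\mathcal{H}$, and hence $F(x)=F(y)$. Similarly, for well-definedness on arrows, if $[\alpha]=[\beta]$ then $\alpha=n_1\beta n_2$ for some $n_1,n_2\in\mor(\n)$, and so
\[
F(\alpha)=F(n_1)\,F(\beta)\,F(n_2)=F(\beta),
\]
since $F(n_1)$ and $F(n_2)$ are identities. Source--target compatibility then follows because, for $\alpha\colon x\to y$ in $\G$, the class $[\alpha]\colon [x]\to [y]$ is sent by $\overline{F}$ to the arrow $F(\alpha)\colon F(x)\to F(y)=\overline{F}([x])\to\overline{F}([y])$.

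Preservation of identities and of composition reduces to the same properties for $F$, since $\G/\n$ is built so that identities and compositions are computed on representatives: $[\id_x]$ is the identity at $[x]$, and $[\alpha][\beta]=[\alpha\beta]$ whenever $\alpha\beta$ is defined. Finally, $F=\overline{F}q$ holds by construction, since $q(x)=[x]$ and $q(\alpha)=[\alpha]$, giving $\overline{F}(q(x))=F(x)$ and $\overline{F}(q(\alpha))=F(\alpha)$.

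The main obstacle, such as it is, is the bookkeeping around the relation $\sim$ on morphisms: one should verify that the single-step description $\alpha\sim n_1\alpha n_2$ already yields an equivalence relation, so that the well-definedness computation above is not just on generators but on the full relation. This is straightforward, because the composite of two such moves is again of the same form (since $\n$ is a subgroupoid, products and inverses of the $n_i$ remain in $\mor(\n)$). Once this is observed, the entire argument collapses to the one-line computation $F(n_1\beta n_2)=F(\beta)$ and its counterpart on objects.
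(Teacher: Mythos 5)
The paper does not actually prove this proposition; it is quoted from Higgins \cite[Prop.~24]{higgins2005categories} without proof, so there is no in-text argument to compare against. Your proof is the standard one and is correct: define $\overline{F}$ on representatives, use the triviality of $F$ on $\n$ to check well-definedness on objects and on arrows, and deduce uniqueness from the surjectivity of $q$ on objects and arrows. The only loose point is your claim that composition in $\G/\n$ is ``computed on representatives'' via $[\alpha][\beta]=[\alpha\beta]$: a composable pair $[\alpha]\colon[x]\to[y]$, $[\beta]\colon[y]\to[z]$ need not admit representatives with $t(\alpha)=s(\beta)$, and the composite is $[\beta n\alpha]$ for some connecting arrow $n$ of $\n$ from $t(\alpha)$ to $s(\beta)$; functoriality of $\overline{F}$ therefore invokes $F(n)=\id$ once more, but with that observation the verification goes through exactly as you describe.
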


	Let $\C$ be a connected small category and let $T$ be a maximal tree in $\C$. It is easy to see that $\loc T$ is a connected normal subgroupoid of $\loc\C$. It follows that $\loc \C/\loc T$ is a one-object groupoid, or in simpler words, a group.
	
	Now, let $c_0\in \obj(\C)$ and let $\rho_\C\colon \loc\C/\loc T\to\aut_{\loc\C}(c_0)$ be the functor defined as $\rho_\C([f])=\beta f\alpha$ for every $f\in \mor(\loc\C)$ where $\alpha$ is the only arrow in $\loc T$ from $c_0$ to $s(f)$ and $\beta$ is the only arrow in $\loc T$ from $t(f)$ to $c_0$.
	A direct computation shows that the quotient map $q_{\C}\colon \loc\C\to \loc\C/\loc T$ is an equivalence of categories whose inverse is the composition
	\[\loc\C/\loc T\xrightarrow{\rho_\C}\aut_{\loc\C}(c_0)\hookrightarrow \loc\C.\]
	In particular, $\rho_\C$ is a group isomorphism whose inverse is the composition
	\[\aut_{\loc\C}(c_0)\hookrightarrow\loc\C\xrightarrow{q_\C}\loc\C/\loc T.\]
	Hence, the quotient $\loc\C/\loc T$ can be canonically identified with the group $\aut_{\loc\C}(c_0)$.

	\section{Regular coverings and fundamental group of Alexandroff spaces}
		\subsection{Fundamental groups of small categories}
		\label{subs:grupo_fundamental}

		With the purpose of fixing notation, we recall the definition of the fundamental groupoid and the fundamental groups of a topological space.
		\begin{definition}\label{def:Pi(X)}
			Let $X$ be a topological space. 
			For every path $\gamma$ in $X$, we write $[\gamma]$ for the path-homotopy equivalence class of $\gamma$. For paths $\gamma_0$ and $\gamma_1$ in $X$ such that $\gamma_0(1)=\gamma_1(0)$, we write $\gamma_0*\gamma_1$ for the concatenation of $\gamma_0$ and $\gamma_1$.
			
			The \emph{fundamental groupoid} of $X$ is the groupoid $\Pi_1(X)$ that has $X$ as its set of objects and the set of path-homotopy equivalence classes of paths from $x$ to $y$ as the set of arrows with source $x$ and target $y$, for every $x,y\in X$. If $[\alpha]\in\Hom_{\Pi_1(X)}(x,y)$ and $[\beta]\in\Hom_{\Pi_1(X)}(y,z)$, the composition $[\beta][\alpha]$ is defined as $[\alpha*\beta]$. 
			
			If $x_0\in X$, we define the \emph{fundamental group of $X$ at $x_0$} as the group \[\pi_1(X,x_0)=\aut_{\Pi_1(X)}(x_0).\]
			Note that this group is the opposite of the usual fundamental group, and hence, it is naturally isomorphic to the latter. 
		\end{definition}
		
		D. Quillen proved in \cite{quillen1973higher} that there exists a canonical isomorphism between the fundamental group $\pi_1(B\C,c_0)$ of a small category $\C$ at an object $c_0$ and the group $\aut_{\loc\C}(c_0)$. Next, we will give an explicit description of this isomorphism.
		
		\begin{definition}\label{def:zeta}
			Let $\C$ be a small category and let $c_0\in\obj(\C)$. We will define a group homomorphism $\zeta_{(\C,c_0)}$ from the group $\aut_{\loc\C}(c_0)$ to the fundamental group $\pi_1(B\C,c_0)$ of $B\C$ at $c_0$.
			
			First, note that if we identify the topological $1$-simplex $\Delta^{1}$ with the unit interval $I$ in the obvious way, then any arrow $f\colon c\to c'$ in $\C$ induces a canonical path $\gamma_f\colon I\to B\C$ defined as $\gamma_{f}(t)=[(\sigma_{f},t)]$ for every $t\in I$, where $\sigma_f$ is the $1$-simplex of $N\C$ that corresponds to $f$. It is clear that $\gamma_{f}$ is a path in $B\C$ from (the $0$-cell that corresponds to) $c$ to (the $0$-cell that corresponds to) $c'$.
			
			There is an obvious covariant functor $\xi_\C\colon\C\to\Pi_1(B\C)$ that maps every $f\in\mor(\C)$ to the path-homotopy class $[\gamma_f]$ of $\gamma_f$. Since $\Pi_1(B\C)$ is a groupoid, this functor induces a functor $\overline{\xi}_\C\colon\loc \C\to\Pi_1(B\C)$ that restricts to a functor (and thus, a group homomorphism) 
			\[\zeta_{(\C,c_0)}\colon\aut_{\loc\C}(c_0)\to\pi_1(B\C,c_0).\]
		\end{definition}
		\begin{prop}\label{prop:quillen_aut_pi}		
			Let $\C$ be a small category and let $c_0\in\obj(\C)$. Let $\zeta=\zeta_{(\C,c_0)}\colon \aut_{\loc\C}(c_0)\to\pi_1(B\C,c_0)$ be the group homomorphism defined in \ref{def:zeta}. Then $\zeta$ is the canonical isomorphism from \cite[Proposition 1]{quillen1973higher}. 
		\end{prop}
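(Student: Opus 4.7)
The plan is to identify $\zeta_{(\C,c_0)}$ with Quillen's isomorphism by showing that both arise from the same underlying functor $\xi_\C\colon \C\to \Pi_1(B\C)$ via the universal property of localization. Since any homomorphism on $\aut_{\loc\C}(c_0)$ factoring through the universal arrow $\iota_\C$ is determined by its values on morphisms of $\C$, the verification reduces to checking that both maps send each $f\in \mor(\C)$ to $[\gamma_f]$.

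The first step is to confirm that $f\mapsto [\gamma_f]$ genuinely defines a functor $\xi_\C$. For identities, the degeneracy relation $(s_0 c,t)\sim (c,\sigma_0 t)$ collapses $\gamma_{\id_c}$ to the constant path at $c$, so $\xi_\C(\id_c)=\id_c$ in $\Pi_1(B\C)$. For composition, given $f\colon c\to c'$ and $g\colon c'\to c''$, the composable pair $(f,g)$ determines a $2$-simplex $\omega$ of $N\C$ with $d_0\omega=g$, $d_2\omega=f$ and $d_1\omega=gf$; the resulting $2$-cell in $B\C$ exhibits $\gamma_{gf}$ as path-homotopic to (a reparameterization of) $\gamma_f*\gamma_g$. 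With composition in $\Pi_1(B\C)$ given by $[\beta][\alpha]=[\alpha*\beta]$, this yields $\xi_\C(gf)=[\gamma_f*\gamma_g]=[\gamma_g][\gamma_f]=\xi_\C(g)\xi_\C(f)$, as required.

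Once $\xi_\C$ is in place, the universal property of localization together with the fact that $\Pi_1(B\C)$ is a groupoid produces a unique extension $\overline{\xi}_\C\colon \loc\C\to \Pi_1(B\C)$, whose restriction to $\aut_{\loc\C}(c_0)$ is $\zeta_{(\C,c_0)}$. The next step is to unpack Quillen's construction in \cite[Proposition 1]{quillen1973higher}: exploiting the cellular structure of $B\C$, Quillen produces an isomorphism $\aut_{\loc\C}(c_0)\to \pi_1(B\C,c_0)$ characterized on generators by sending the class $\iota_\C(f)$ of each morphism $f$ of $\C$ to the homotopy class of the corresponding $1$-cell $\gamma_f$. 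Thus both $\zeta_{(\C,c_0)}$ and Quillen's isomorphism arise as restrictions of extensions of the \emph{same} functor $\xi_\C$ along $\iota_\C$; by uniqueness in the universal property they coincide.

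The main obstacle is bookkeeping with composition and orientation conventions. Definition \ref{def:Pi(X)} uses the opposite of the standard convention for $\pi_1$, and a single sign inversion at any stage, for instance from reversing the parameterization of $\gamma_f$ or from listing the faces of the $2$-simplex $\omega$ in a different order, would replace $\zeta$ with its inverse. Care must therefore be taken to verify that the orientation of $\gamma_f$, the direction of composition in $\Pi_1(B\C)$, and the convention used in Quillen's paper are all consistent; once this is confirmed, the identification follows by direct inspection of the two definitions.
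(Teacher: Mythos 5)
Your first two paragraphs --- verifying that $f\mapsto[\gamma_f]$ is functorial via the degeneracy relation for identities and the $2$-simplex of a composable pair for compositions --- are correct, and they fill in a detail the paper leaves implicit in Definition \ref{def:zeta}. The gap is in your third paragraph, where you assert that Quillen's isomorphism from \cite[Proposition 1]{quillen1973higher} is ``characterized on generators by sending the class $\iota_\C(f)$ of each morphism $f$ of $\C$ to the homotopy class of the corresponding $1$-cell $\gamma_f$.'' That is exactly the statement to be proved, not something that can be read off from Quillen's paper. Quillen obtains the isomorphism $\aut_{\loc\C}(c_0)\cong\pi_1(B\C,c_0)$ indirectly, as a by-product of the chain of equivalences $\cov(B\C)\simeq\Ds\cong\set^{\loc\C}\simeq\set^{G}\cong G\text{-}\set$ (where $\Ds$ is the category of morphism-inverting functors $\C\to\set$ and $G=\aut_{\loc\C}(c_0)$), i.e., by matching two classifications of the coverings of $B\C$. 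In particular his isomorphism is not given a priori as the restriction of a functor $\loc\C\to\Pi_1(B\C)$ sending $f$ to $[\gamma_f]$, so the uniqueness clause of the universal property of localization cannot be invoked against it; you are assuming the conclusion.

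The paper's proof does precisely the work you have skipped: it makes each of Quillen's equivalences explicit, identifies the covering corresponding to the $G$-set $G$ (acting on itself by left translation) as the universal cover $p\colon\widetilde{B\C}\to B\C$, computes its deck transformation group in two ways --- as $\aut_{G\text{-}\set}(G)\cong G^{\op}$ and as $\pi_1(B\C,c_0)^{\op}$ via path lifting as in \cite[Proposition 1.39]{hatcher2002algebraic} --- and then checks that the deck transformation attached to $g\in G$ sends the basepoint $\tilde{c}_0$ to the endpoint of the lift of a loop in the class $\zeta(g)$, which forces the composite isomorphism $G\cong\pi_1(B\C,c_0)$ to be $\zeta$. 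Your closing remark about orientation and composition conventions identifies a real hazard, but it is resolved only by this explicit tracking through the equivalences, not by ``direct inspection of the two definitions.'' To repair your argument you would need either to carry out this identification or to cite a source that describes Quillen's isomorphism on elements of the form $\iota_\C(f)$.
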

		\begin{proof}
			We will follow Quillen's proof of \cite[Proposition 1]{quillen1973higher} and give explicit constructions for the involved functors.
			
			Without loss of generality we can assume that $\C$ is connected. We will write
			\begin{itemize}
				\item $G$ for the group $\aut_{\loc\C}(c_0)$,
				\item $\Ds$ for the full subcategory of $\set^{\C}$ whose objects are the morphism-inverting functors from $\C$ to $\set$,
				\item $\cov(B\C)$ for the category of coverings of $B\C$, and
				\item $G\text{-}\set$ for the category of $G$-sets.
			\end{itemize}

			Quillen constructs equivalences of categories
			\[\cov(B\C)\simeq \Ds\cong\set^{\loc \C}\simeq\set^G\cong G\text{-}\set\]
			that we will try to describe.
			
			The equivalence $\cov(B\C)\simeq \Ds$ is constructed as follows.
			Given a covering map $p\colon E\to B\C$ and an arrow $f\colon c\to c'$ in $\C$, there is a bijection $E(f)\colon p^{-1}(c)\to p^{-1}(c')$ induced by the action of $[\gamma_f]$ that maps every element $\tilde{c}$ of $p^{-1}(c)$ to the end point of the unique lift of $\gamma_f$ by $p$ starting at $\tilde{c}$. The assignment $f\mapsto E(f)$ defines a morphism-inverting functor $M_p\colon\C\to\set$ induced by $p$.
			
			The equivalence $\cov(B\C)\simeq \Ds$ is the functor that maps every covering $p$ of $B\C$ to the associated morphism-inverting functor $M_p$. An inverse to this equivalence is constructed by mapping every morphism-inverting functor $F\colon \C\to\set$ to the covering projection
			\[B(u_F)\colon B(*\downarrow F)\to B\C\]
			induced by the projection $u_F$ from the comma category $*\downarrow F$ to $\C$.

			The isomorphism $\Ds\cong\set^{\loc \C}$ is the functor that maps every morphism-inverting functor $M\colon \C\to \set$ to the unique functor $\overline{M}\colon \loc \C\to\set$ such that $\overline{M}\iota_{\C}=M$.
			
			The equivalence $\set^{\loc\C}\simeq\set^{G}$ is induced by the inclusion $G\hookrightarrow \loc\C$ which is an equivalence of categories since $\C$ is connected.
			
			The isomorphism $\set^{G}\cong G$-$\set$ is the obvious isomorphism that maps a functor $F\colon G\to\set$ to the corresponding $G$-set (which is nothing but the set $F(c_0)$ equipped with the left $G$-action induced by $F$). 
			
			Now, consider the group $G$ as a $G$-set acting on itself by left translation and let $p\colon \widetilde{B\C}\to B\C$ be the covering of $B\C$ that corresponds to $G$ through the equivalence $G$-$\set\simeq\cov(B\C)$. Note that $p^{-1}(c_0)$ is the discrete space $\{c_0\}\times G$. Let $\tilde{c}_0=(c_0,\id_{c_0})\in p^{-1}(c_0)$.
			
			Given a path $\gamma$ in $B\C$ and $x\in p^{-1}(\gamma(0))$, we will write $\widetilde{\gamma}^{x}$ for the unique lift of $\gamma$ by $p$ from $x$. The canonical identification $\{c_0\}\times G\cong G$ shows that 
			\[(c_0,gh)=g\cdot (c_0,h)=\widetilde{\gamma_g}^{(c_0,h)}(1)\] for some $\gamma_g\in\zeta(g)$, for every $g,h\in G$. 
			
			Since the action of $G$ is transitive, it is clear that $\widetilde{B\C}$ is a connected covering of $B\C$. On the other hand, any non-empty $G$-set $X$ defines a $G$-map $f\colon G\to X$ by $f(g)=g\cdot x$ for some fixed $x\in X$. It is clear then that $p$ factors through any covering of $B\C$, from where it follows that $p$ is the universal cover of $B\C$.
		
			Now, the equivalence $\cov(B\C)\simeq G\text{-}\set$ restricts to an isomorphism 
			\[\aut_{\cov(B\C)}(p)\cong \aut_{G\text{-}\set}(G).\]
			Note that $\aut_{\cov(B\C)}(p)$ is nothing but the deck transformation group $\deck(p)$ of $p$. From the proof of  \cite[Proposition 1.39]{hatcher2002algebraic}, it follows that there exist an isomorphism  
			\[\pi_1(B\C,c_0)^{\op}\cong \deck(p)\] that maps the element $[\gamma]$ to the unique deck transformation $\tau_\gamma$ of $p$ such that 
			\[\tau_\gamma\big(\tilde{c}_0\big)=\widetilde{\gamma}^{\tilde{c}_0}(1).\]
			
			On the other hand, the automorphism group of $G$ in $G$--$\set$ is canonically isomorphic to $G^{\op}$ through the isomorphism that maps every $g\in G$ to the bijection $\sigma_g\colon G\to G$ defined by 
			\[\sigma_g(h)=hg\] for every $h\in G$. 
			The isomorphism
			\[G^{\op}\cong \aut_{G\text{-}\set}(G)\cong \deck(p)\cong\pi_1(B\C,c_0)^{\op}\] is the opposite of the isomorphism from Quillen's proof.
			
			Now, for every $g\in G$, the bijection $\sigma_g$ maps the identity $\id_{c_0}$ to $g$, from where it follows that the deck transformation of $p$ that corresponds to $g$ is the unique deck transformation $\tau^{g}$ of $p$ that maps  $\tilde{c}_0=(c_0,\id_{c_0})$ to $(c_0,g)=\widetilde{\gamma_g}^{\tilde{c}_0}(1)$. 
			Hence, every $g\in G$ maps as 
			\[g\mapsto \tau^{g}=\tau_{\gamma_g}\mapsto [\gamma_g]=\zeta(g)\] through the composition 
			$G\cong\deck(p)^{\op}\cong\pi_1(B\C,c_0)$. Thus, the isomorphism $G\cong\pi_1(B\C,c_0)$ is precisely the group homomorphism $\zeta$, as we wished to show.
		\end{proof}
		\begin{prop}\label{prop:naturality}
			The functors $\xi_\C$ and $\overline{\xi}_\C$ from Definition \ref{def:zeta} are natural in $\C$ for $\C\in\cat$. The functor $\zeta_{(\C,c_0)}$ is natural in $(\C,c_0)$ for $(\C,c_0)\in\cat_*$.
		\end{prop}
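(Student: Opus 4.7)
The plan is to prove the three naturality statements in order, each reducing the next to a direct diagram chase via the universal properties already in play.

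First I would verify naturality of $\xi_\C$. Given a functor $F\colon\C\to\Ds$, I need to show that $\Pi_1(BF)\circ\xi_\C=\xi_\Ds\circ F$. On objects both composites send $c$ to $Fc$ (viewed as $0$-cells). On an arrow $f\colon c\to c'$ in $\C$, the key observation is that the $1$-simplex $\sigma_{F(f)}$ of $N\Ds$ is literally $F\circ\sigma_f$, since both are the functor $[1]\to\Ds$ picking out $F(f)$. Hence
\[
(BF\circ\gamma_f)(t)=BF\bigl([\sigma_f,t]\bigr)=[F\circ\sigma_f,t]=[\sigma_{F(f)},t]=\gamma_{F(f)}(t)
\]
for every $t\in I$, so $BF\circ\gamma_f=\gamma_{F(f)}$ on the nose and in particular $[BF\circ\gamma_f]=[\gamma_{F(f)}]$ in $\Pi_1(B\Ds)$. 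Thus both composites agree on arrows.

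Second, I would deduce naturality of $\overline{\xi}_\C$ from the universal property of $\loc$. Both $\Pi_1(BF)\circ\overline{\xi}_\C$ and $\overline{\xi}_\Ds\circ\loc F$ are functors $\loc\C\to\Pi_1(B\Ds)$, and both have values in a groupoid. Pre-composing each with $\iota_\C\colon\C\to\loc\C$ gives, using $\loc F\circ\iota_\C=\iota_\Ds\circ F$ and $\overline{\xi}_{(-)}\circ\iota_{(-)}=\xi_{(-)}$ together with the previous step,
\[
\Pi_1(BF)\circ\overline{\xi}_\C\circ\iota_\C=\Pi_1(BF)\circ\xi_\C=\xi_\Ds\circ F=\overline{\xi}_\Ds\circ\iota_\Ds\circ F=\overline{\xi}_\Ds\circ\loc F\circ\iota_\C.
\]
By the uniqueness clause in the universal property of $\iota_\C$ (both extensions are morphism-inverting since their targets are groupoids), the two functors coincide.

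Finally, naturality of $\zeta_{(\C,c_0)}$ is obtained by restricting the previous square to automorphism groups. A basepoint-preserving functor $F\colon(\C,c_0)\to(\Ds,d_0)$ satisfies $F(c_0)=d_0$, so $\loc F$ restricts to a group homomorphism $\aut_{\loc\C}(c_0)\to\aut_{\loc\Ds}(d_0)$ and $\Pi_1(BF)$ restricts to $\pi_1(BF,c_0)\colon\pi_1(B\C,c_0)\to\pi_1(B\Ds,d_0)$. For $g\in\aut_{\loc\C}(c_0)$ we then have
\[
\pi_1(BF,c_0)\bigl(\zeta_{(\C,c_0)}(g)\bigr)=\Pi_1(BF)\bigl(\overline{\xi}_\C(g)\bigr)=\overline{\xi}_\Ds\bigl(\loc F(g)\bigr)=\zeta_{(\Ds,d_0)}\bigl(\loc F(g)\bigr),
\]
yielding the required naturality in $(\C,c_0)\in\cat_*$. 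The only real point of care is the identification $\sigma_{F(f)}=F\circ\sigma_f$ at the very first step; once that is in hand, everything else is formal bookkeeping with the universal property of localization and the fact that $F$ preserves the basepoint.
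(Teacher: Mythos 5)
Your proposal is correct and follows essentially the same route as the paper: the identity $\sigma_{F(f)}=F\circ\sigma_f$ gives $BF\circ\gamma_f=\gamma_{F(f)}$ and hence naturality of $\xi$, the universal property of $\iota_\C$ then yields naturality of $\overline{\xi}$, and $\zeta$ is handled by restriction to automorphism groups. The only difference is that you spell out the last restriction step, which the paper dismisses as obvious.
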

		\begin{proof}
			Let $\C$ and $\Ds$ be small categories and let $F\colon\C\to\Ds$ be a functor. Let $f$ be an arrow in $\C$.
			Note that $\sigma_{F(f)}=F\sigma_f$. 
			It follows that 
			\[\gamma_{F(f)}(t)=[\sigma_{F(f)},t]=[F\sigma_f,t]=BF(\gamma_f(t))=(BF\gamma_f)(t)\]
			for every $t\in I$. Hence, $\gamma_{F(f)}=BF\gamma_f$. Thus,
			\[\Pi_1(BF)\xi_{\C}(f)=\Pi_1(BF)([\gamma_f])=[BF\gamma_f]=[\gamma_{F(f)}]=\xi_{\Ds}(F(f))\]
			Therefore, $\xi$ is a natural transformation from the inclusion functor $\ATop\hookrightarrow\cat$ to the composition
			\[\ATop\hookrightarrow\cat\xrightarrow{B}\Top\xrightarrow{\Pi_1}\grpd\hookrightarrow{\cat}.\]
			
			The naturality of the functor $\overline{\xi}$ follows easily by a straightforward ``arrow trick''. Indeed, 
	\[\Pi_1(BF)\overline{\xi}_{\C}\iota_{\C}=\Pi_1(BF)\xi_{\C}=\xi_{\Ds}F=\overline{\xi}_{\Ds}\iota_{\Ds}F=\overline{\xi}_{\Ds}\loc F\iota_{\C}\]
			and hence $\Pi_1(BF)\overline{\xi}_{\C}=\overline{\xi}_{\Ds}\loc F$. 
			
			The naturality of the functor $\zeta$ is obvious.
		\end{proof}
		
		\begin{coro}\label{coro:zeta_nat}
			The natural transformation  
			\[\zeta\colon \aut_{\loc(\cdot)}(\cdot)\cong \pi_1(B(\cdot),\cdot)\colon \cat_*\to\grp.\]
			is a natural isomorphism.
		\end{coro}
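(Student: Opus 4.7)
The plan is to assemble the corollary directly from the two preceding propositions. Proposition \ref{prop:naturality} already establishes that $\zeta$ is a natural transformation between the functors $\aut_{\loc(\cdot)}(\cdot)$ and $\pi_1(B(\cdot),\cdot)$ on the category $\cat_*$, while Proposition \ref{prop:quillen_aut_pi} identifies each component $\zeta_{(\C,c_0)}$ with the canonical Quillen isomorphism from \cite[Proposition 1]{quillen1973higher}, hence shows that each component is a group isomorphism.

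First I would invoke Proposition \ref{prop:naturality} to assert that $\zeta \colon \aut_{\loc(\cdot)}(\cdot) \Rightarrow \pi_1(B(\cdot),\cdot)$ is a well-defined natural transformation of functors $\cat_* \to \grp$; this requires no further verification since the proposition was stated precisely at the level of pointed small categories. Next I would invoke Proposition \ref{prop:quillen_aut_pi} to conclude that for every object $(\C,c_0)$ of $\cat_*$, the component $\zeta_{(\C,c_0)}$ is a group isomorphism, being identified with the classical isomorphism produced by Quillen.

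Finally, I would appeal to the general categorical fact that a natural transformation whose every component is an isomorphism is itself a natural isomorphism (its objectwise inverses automatically assemble into a natural transformation in the opposite direction). Since there is no genuine obstacle here, the proof is essentially a one-line combination of the two propositions, and the main thing to be careful about is simply to state explicitly that naturality and pointwise invertibility together yield the conclusion.
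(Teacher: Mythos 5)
Your proposal is correct and matches the paper exactly: the paper's proof is simply ``Immediate from \ref{prop:quillen_aut_pi} and \ref{prop:naturality}.'' You have merely spelled out the same one-line combination of naturality plus pointwise invertibility.
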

		\begin{proof}
			Immediate from \ref{prop:quillen_aut_pi} and \ref{prop:naturality}.
		\end{proof}
		\begin{ex}\label{exam:K(G,1)}
			Let $G$ be a group, and consider $G$ as a category with a unique object $*$. 
			Let $\varphi\in\set^{G}$ be the functor associated to $G$ when considering $G$ as a left $G$-set in the usual way, that is, equipped with the left action of $G$ on itself by left translation. 
			
			From \ref{prop:quillen_aut_pi} it follows that $\zeta_{(G,*)}\colon G=\loc G\to\pi_1(BG,*)$ is an isomorphism. 
			
			The comma category $*\downarrow \varphi$ can be computed explicitly. Its objects are in canonical bijection with the elements of $G$, and there exists a unique arrow (corresponding to the element $hg^{-1}$) between the objects that correspond to  $g$ and $h$, for every $g,h\in G$. Thus, $*\downarrow \varphi$ is an indiscrete category and hence $B(*\downarrow \varphi)$ is contractible. Since $B(u_\varphi)\colon B(*\downarrow \varphi)\to BG$ is a covering map, it is clear that $BG$ is aspherical. Therefore, $BG$ is an Eilenberg-MacLane space of type $(G,1)$.
		\end{ex}
		\begin{rem}\label{rema:iso_nat_quillen}
			From \ref{coro:zeta_nat} and \ref{exam:K(G,1)} it follows that the functor \[\grp\hookrightarrow\cat_*\xrightarrow{B}\Top_*\xrightarrow{\pi_1}\grp\] is naturally isomorphic to the identity of $\grp$.
	\end{rem}
	If $X$ is an Alexandroff space, the space $BX$ is weakly equivalent to $X$ and hence the fundamental groupoid of $BX$ is equivalent to the fundamental groupoid of $X$. Furthermore, it is well known that the fundamental groupoid of $BX$ is equivalent to $\loc X$ (\cite[Chapter III, Section 1, Corollary 1.2]{goerss2009simplicial}). It follows that the groupoid $\Pi_1(X)$ is equivalent to $\loc X$. A stronger result holds: the groupoids $\Pi(X)$ and $\loc X$ are naturally isomorphic. We give a proof of this fact that does not rely on deep results of simplicial homotopy theory, and thus, can be considered elementary.
	\begin{definition}\label{def:eta}
		Let $X$ be an Alexandroff space and let $x,y\in X$ such that $x\leq y$. We define the path $\eta(x\leq y)\colon I\to X$ as 		\begin{displaymath}
			\eta(x\leq y)(t)=\begin{cases} x&\text{ if $t<1$,}\\ y&\text{ if $t=1$.}\end{cases}
		\end{displaymath}
	\end{definition}
	\begin{theo}\label{theo:loc_iso_pi}
		The functors $\loc\colon \ATop\to \grpd$ and $\Pi_1\colon\ATop\to\grpd$ are naturally isomorphic. In particular, for every pointed Alexandroff space $(X,x_0)$ there exists a natural isomorphism $\aut_{\loc X}(x_0)\cong \pi_1(X,x_0)$.
		\end{theo}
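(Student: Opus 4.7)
The plan is to construct an explicit groupoid morphism $\overline{\eta}_X\colon\loc X\to\Pi_1(X)$ and show it is a natural isomorphism. I would send each comparison $x\leq y$ in $X$ (viewed as a thin category) to the class $[\eta(x\leq y)]\in\Pi_1(X)$ of the path from Definition \ref{def:eta}. To check that this assignment is functorial, the only nontrivial verification is that for $x\leq y\leq z$ one has $[\eta(x\leq y)*\eta(y\leq z)]=[\eta(x\leq z)]$. I would obtain this from the explicit path-homotopy
\[
H(s,t)=\begin{cases} x & \text{if } t<\tfrac{1}{2}+\tfrac{s}{2},\\ y & \text{if } \tfrac{1}{2}+\tfrac{s}{2}\leq t<1,\\ z & \text{if } t=1,\end{cases}
\]
whose continuity reduces to a short case analysis on the lower sets of $X$ (which are precisely its open sets). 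Since $\Pi_1(X)$ is a groupoid, the universal property of $\iota_X$ yields a unique groupoid morphism $\overline{\eta}_X\colon\loc X\to\Pi_1(X)$, which is the identity on objects.

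Next, I would identify $\overline{\eta}_X$ with the composite $\Pi_1(\varphi_X)\circ\overline{\xi}_X$, where $\overline{\xi}_X$ is the functor of Definition \ref{def:zeta} applied to the thin category $X$ and $\varphi_X$ is the natural weak equivalence of Definition \ref{def:varphi_X}. A direct calculation with the explicit formula of Definition \ref{def:varphi_X} yields $\varphi_X\circ\gamma_{(x\leq y)}=\eta(x\leq y)$ for each comparison $x\leq y$, so both composites agree on the generating arrows of $\loc X$, and hence as groupoid morphisms. By Proposition \ref{prop:quillen_aut_pi}, the restriction of $\overline{\xi}_X$ to each automorphism group is an isomorphism; combined with essential surjectivity (every point of the CW-complex $BX$ is path-connected to a 0-cell) and the standard torsor argument using that $\pi_0(\loc X)$ and $\pi_0(BX)$ are canonically identified, this shows that $\overline{\xi}_X$ is an equivalence of groupoids. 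By Theorem \ref{theo:varphi_X}, $\varphi_X$ is a weak homotopy equivalence, so $\Pi_1(\varphi_X)$ is also an equivalence of groupoids. Hence $\overline{\eta}_X$ is an equivalence; being the identity on objects, it is a groupoid isomorphism.

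Naturality follows by a direct check: for a continuous map $f\colon X\to Y$ between Alexandroff spaces, inspection of the definition gives $f\circ\eta(x\leq y)=\eta(f(x)\leq f(y))$, so $\Pi_1(f)\circ\overline{\eta}_X$ and $\overline{\eta}_Y\circ\loc f$ agree on the generating arrows of $\loc X$, and therefore on all of $\loc X$. The last claim of the statement, asserting the existence of a natural isomorphism $\aut_{\loc X}(x_0)\cong\pi_1(X,x_0)$, is an immediate consequence by restricting $\overline{\eta}_X$ to the full subcategory spanned by $x_0$.

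I expect the main technical point to be confirming that $\overline{\xi}_X$ is fully faithful on hom-sets between \emph{distinct} objects of the same connected component, since Proposition \ref{prop:quillen_aut_pi} only provides an isomorphism of automorphism groups; this is routine once one observes that the relevant hom-sets are non-empty torsors over isomorphic groups, and that path-components of $X$ coincide with preorder-components of $X$ (the latter proved via the minimal open neighbourhoods $U_x$ and compactness of $I$).
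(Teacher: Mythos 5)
Your proposal is correct and follows essentially the same route as the paper: both construct the functor on generators via $\eta(x\leq y)$, factor it through $\iota_X$, identify the result with $\Pi_1(\varphi_X)\circ\overline{\xi}_X$ using $\varphi_X\circ\gamma_{x\leq y}=\eta(x\leq y)$, and conclude from Proposition \ref{prop:quillen_aut_pi} and Theorem \ref{theo:varphi_X}. The only difference is that you spell out details the paper leaves as ``standard arguments'' (the explicit functoriality homotopy and the torsor/$\pi_0$ bookkeeping needed to upgrade isomorphisms on automorphism groups to an isomorphism of groupoids), which is a welcome but not essentially different elaboration.
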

		\begin{proof}
			For every Alexandroff space $X$, we consider the functor $Z_X\colon X\to \Pi_1(X)$ that is the identity on objects and that maps every arrow $x\leq x'$ in $X$ to the path-homotopy class $[\eta(x\leq x')]$.
			
			If $X$ and $Y$ are Alexandroff spaces and $f\colon X\to Y$ is a continuous function, then $f\circ \eta(x\leq x')=\eta(f(x)\leq f(x'))$ for every $x,x'\in X$ such that $x\leq x'$. It follows that the assignment $X\mapsto Z_X$ defines a natural transformation $Z$ from the functor \[\ATop\cong\ord\hookrightarrow \cat\] to the functor \[\ATop\hookrightarrow \Top\xrightarrow{\Pi_1}\grpd\hookrightarrow\cat.\]			
			
			The factorization of $Z_X$ through $\iota_X$ induces a functor $\overline{Z}_X\colon \loc X\to \Pi_1(X)$ for every Alexandroff space $X$. It is  not hard to see that the assignment $X\mapsto \overline{Z}_X$ defines a natural transformation from the functor 
			\[\ATop\cong\ord\hookrightarrow \cat\xrightarrow{\loc}\grpd\] to the functor \[\ATop\hookrightarrow \Top\xrightarrow{\Pi_1}\grpd.\]
			
			We will prove that $\overline{Z}_X$ is an isomorphism for every Alexandroff space $X$.
			
			Let $X$ be an Alexandroff space. By standard arguments, and since $\overline{Z}_X$ is the identity on objects, it is enough to show that $\overline{Z}_X$ restricts to isomorphisms $\overline{Z}_X|\colon\aut_{\loc X}(x_0)\cong\pi_1(X,x_0)$ for every $x_0\in X$.
			
			Let $x_0\in X$ and let $\varphi_X\colon BX\to X$ be the weak equivalence defined in Definition \ref{def:varphi_X}. For every arrow $x\leq x'$ in $X$, we write $\gamma_{x\leq x'}$ for the canonical path in $BX$ from $x$ to $x'$ defined in \ref{def:zeta}.
			
			A direct computation shows that $\eta(x\leq x')=\varphi_X\circ \gamma_{x\leq x'}$ for every arrow $x\leq x'$ in $X$. It follows that
			\[Z_X(x\leq x')=[\eta(x\leq x')]=[\varphi_X\circ \gamma_{x\leq x'}]=\Pi_1(\varphi_X)([\gamma_{x\leq x'}])=\Pi_1(\varphi_X)\overline{\xi}_X\iota_X(x\leq x')\] for every $x,x'\in X$ such that $x\leq x'$, where $\overline{\xi}_X$ is the functor from Definition \ref{def:zeta}.
			
			It is immediate that $\overline{Z}_X=\Pi_1(\varphi_X)\overline{\xi}_X$. 
			Hence, the diagram from Figure \ref{fig:diagram} commutes.
			
			\begin{figure}[!ht]
				\begin{center}
					\begin{tikzpicture}[x=3cm,y=2cm]
						\draw (0,0) node(LX){$\loc X$};
						\draw (1,0) node(PiBX){$\Pi_1(BX)$};
						\draw (2,0) node(PiX){$\Pi_1(X)$};
						\draw (0,1) node(autLX){$\aut_{\loc X}(x_0)$};
						\draw (1,1) node(piBX){$\pi_1(BX,x_0)$};
						\draw (2,1) node(piX){$\pi_1(X,x_0)$};
						\draw (1,-0.9) node{$\overline{Z}_X$};
						\draw[->] (LX) -- (PiBX) node [midway,below] {$\overline{\xi}_X$};
						\draw[->] (PiBX) -- (PiX) node [midway,below] {$\Pi_1(\varphi_X)$};
						\draw[->] (autLX) -- (piBX) node [midway,above] {$\zeta_{(X,x_0)}$};
						\draw[->] (piBX) -- (piX) node [midway,above] {$\pi_1(\varphi_X)$};
						\draw[->,bend right=45] (LX) to (PiX);
						\draw[right hook->] (autLX) -- (LX);
						\draw[right hook->] (piBX) -- (PiBX);
						\draw[right hook->] (piX) -- (PiX);
					\end{tikzpicture}
				\end{center}
				\caption{}\label{fig:diagram}
			\end{figure}
			
			It follows that the restriction $\overline{Z}_X|\colon \aut_{\loc X}(x_0)\to \pi_1(X,x_0)$ of $\overline{Z}_X$ is nothing but the composition	$\pi_1(\varphi_X)\zeta_{(X,x_0)}$ which is clearly an isomorphism.
			
			The result follows.
		\end{proof}

		\subsection{Regular coverings of Alexandroff spaces}
		\label{subs:coverings}

		For every connected small category $\C$, every object $c_0\in\obj(\C)$ and every maximal tree $T$ of $\C$, we have a functor	$\mathcal{F}_{T}\colon\C\to \pi_1(\C,c_0)$ induced by $T$ which is the composition 
		\[\C\xrightarrow{\iota_\C}\loc \C\xrightarrow{q_\C} \loc \C/\loc T\xrightarrow{\cong} \pi_1(\C,c_0)\]
		(although $\mathcal{F}_{T}$ depends on $c_0$ as well as on $T$, we decided to leave the object $c_0$ out of the notation for the sake of simplicity).
		
		Given a group homomorphism $\alpha\colon\pi_1(\C,c_0)\to G$, the functor $\alpha\circ\mathcal{F}_T$ will be denoted by $\mathcal{F}_{T,\alpha}$.
		
		Note that the functor $\mathcal{F}_{T,\alpha}$ induces a group homomorphism
		\[\pi_1(\C,c_0)\xrightarrow{(\mathcal{F}_{T,\alpha})_*} \pi_1(G,*)\cong G,\]
		where $\pi_1(G,*)\cong G$ is the canonical isomorphism from Example \ref{exam:K(G,1)}
		
		\begin{lemma}\label{lemm:pi=aut}
			With the above definitions, the composition
			\[\pi_1(\C,c_0)\xrightarrow{(\mathcal{F}_{T,\alpha})_*} \pi_1(G,*)\cong G\] is the group homomorphism $\alpha$.
		\end{lemma}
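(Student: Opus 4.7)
The plan is to reduce the claim to a direct computation inside the localization by exploiting the naturality of Quillen's isomorphism $\zeta$ (Corollary \ref{coro:zeta_nat}) together with the canonical identification $\pi_1(BG,*)\cong G$ furnished by Example \ref{exam:K(G,1)}. Applying naturality of $\zeta$ to the functor $\mathcal{F}_{T,\alpha}\colon\C\to G$ gives the commutative square
\[
(\mathcal{F}_{T,\alpha})_*\circ\zeta_{(\C,c_0)}=\zeta_{(G,*)}\circ\loc(\mathcal{F}_{T,\alpha})|_{\aut_{\loc\C}(c_0)}.
\]
Since $\zeta_{(G,*)}$ is precisely the canonical isomorphism $G=\aut_{\loc G}(*)\xrightarrow{\cong}\pi_1(BG,*)$ to which $\pi_1(G,*)\cong G$ refers, the statement reduces to proving that the restriction of $\loc(\mathcal{F}_{T,\alpha})$ to $\aut_{\loc\C}(c_0)$ equals $\alpha$.

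Next I would observe that every arrow $t$ of $T$ becomes an identity under $q_\C\circ\iota_\C$, so $\mathcal{F}_T(t)$ is an identity in $\pi_1(\C,c_0)$ and therefore $\mathcal{F}_{T,\alpha}(t)=\id_{*}\in G$. Consequently $\loc(\mathcal{F}_{T,\alpha})\colon\loc\C\to G$ sends every arrow of $\loc T$ to the identity of $G$, and by the universal property of the quotient groupoid it factors uniquely as $\overline{F}\circ q_\C$ for a groupoid homomorphism $\overline{F}\colon\loc\C/\loc T\to G$. I would then verify that $\overline{F}=\alpha\circ\rho_\C$ by checking on the generating arrows $[\iota_\C(f)]$ with $f\in\mor(\C)$:
\[
\overline{F}([\iota_\C(f)])=\loc(\mathcal{F}_{T,\alpha})(\iota_\C(f))=\mathcal{F}_{T,\alpha}(f)=\alpha(\mathcal{F}_T(f))=\alpha(\rho_\C([\iota_\C(f)])),
\]
where the middle equalities are just the definitions of $\mathcal{F}_{T,\alpha}$ and $\mathcal{F}_T$.

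Finally, for any $g\in\aut_{\loc\C}(c_0)$ both endpoints of $g$ are $c_0$, so the unique $\loc T$-arrows appearing in the formula defining $\rho_\C$ are both $\id_{c_0}$, and hence $\rho_\C([g])=g$. Combining the two factorizations yields
\[
\loc(\mathcal{F}_{T,\alpha})(g)=\overline{F}(q_\C(g))=\overline{F}([g])=\alpha(\rho_\C([g]))=\alpha(g),
\]
which is exactly what was required. The only real subtlety in the argument is keeping track of the several canonical identifications in play ($\pi_1(\C,c_0)\cong\aut_{\loc\C}(c_0)$, $\loc\C/\loc T\cong\aut_{\loc\C}(c_0)$ and $\pi_1(G,*)\cong G$); once these are aligned via Corollary \ref{coro:zeta_nat} and Example \ref{exam:K(G,1)}, the rest is a short formal verification, so I do not anticipate any substantive obstacle.
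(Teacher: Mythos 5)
Your proof is correct and uses essentially the same approach as the paper: both reduce, via the naturality of $\zeta$ (Corollary \ref{coro:zeta_nat}) and the identification $\pi_1(G,*)\cong G$ of Example \ref{exam:K(G,1)}, to the observation that $\rho_\C\circ q_\C$ restricts to the identity on $\aut_{\loc\C}(c_0)$, so that $\loc(\mathcal{F}_{T,\alpha})$ restricted to $\aut_{\loc\C}(c_0)$ is $\alpha$ under the identification $\aut_{\loc\C}(c_0)\cong\pi_1(\C,c_0)$ given by $\zeta_{(\C,c_0)}$ (an identification you use silently when writing $\overline{F}=\alpha\circ\rho_\C$, but which you correctly flag). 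The only difference is organizational: you apply naturality once to $\mathcal{F}_{T,\alpha}$ as a whole and factor $\loc(\mathcal{F}_{T,\alpha})$ through the quotient groupoid, while the paper computes the induced map on fundamental groups separately for each factor of the composition $\C\to\loc\C\to\loc\C/\loc T\to\pi_1(\C,c_0)\to G$.
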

		\begin{proof}
			We will write $\xrightarrow{\cong}$ for the canonical isomorphisms described so far. This notation should not be a cause of confusion. 
			
			It is easy to see that $\iota_\C\colon \C\to\loc\C$ induces the identity 
			\[\aut_{\loc\C}(c_0)\to\aut_{\loc\C}(c_0)\]
			and thus, it induces the isomorphism
			\[\pi_1(\C,c_0)\xrightarrow{\cong}\aut_{\loc\C}(c_0)\xrightarrow{\cong}\pi_1(\loc\C,c_0).\]
			
			Similarly, the inclusion functor $\aut_{\loc\C}(c_0)\hookrightarrow\loc\C$ induces the identity morphism
			\[\aut_{\loc\C}(c_0)\to\aut_{\loc\C}(c_0)\]
			as well, and thus, it induces the group homomorfism
			\[\pi_1(\aut_{\loc\C}(c_0),c_0)\xrightarrow{\cong}\aut_{\loc\C}(c_0)\xrightarrow{\cong}\pi_1(\loc\C,c_0).\]
			
			As we mentioned earlier, the composition
			\[\aut_{\loc\C}(c_0)\hookrightarrow\loc\C\xrightarrow{q_\C}\loc\C/\loc T\xrightarrow{\rho_\C}\aut_{\loc\C}(c_0)\] is the identity. It follows that the functor $\rho_\C q_\C\colon \loc\C\to\aut_{\loc\C}(c_0)$ induces the isomorphism
			\[\pi_1(\loc\C,c_0)\xrightarrow{\cong}\aut_{\loc\C}(c_0)\xrightarrow{\cong}\pi_1(\aut_{\loc\C}(c_0),c_0)\]
			between the fundamental groups.
			
			From \ref{rema:iso_nat_quillen}, we see that the canonical isomorphism $\aut_{\loc\C}(c_0)\to\pi_1(\C,c_0)$ induces the isomorphism
			\[\pi_1(\aut_{\loc\C}(c_0),c_0)\xrightarrow{\cong}\aut_{\loc\C}(c_0)\xrightarrow{\cong}\pi_1(\C,c_0)\xrightarrow{\cong}\pi_1(\pi_1(\C,c_0),*)\] between the fundamental groups.
			
			By composing the above induced morphisms, we obtain that the morphism
			\[\C\xrightarrow{\iota_\C}\loc\C\xrightarrow{\rho_\C q_\C}\aut_{\loc\C}(c_0)\xrightarrow{\cong}\pi_1(\C,c_0)\] induces the isomorphism
			\[\pi_1(\C,c_0)\xrightarrow{\cong}\pi_1(\pi_1(\C,c_0),*)\] between the fundamental groups.
			
			By naturality of $\zeta$ we obtain that the morphism induced by $\mathcal{F}_{T,\alpha}\colon\C\to G$ between the fundamental groups is the morphism
			\[\pi_1(\C,c_0)\xrightarrow{\alpha} G\xrightarrow{\cong}\pi_1(G,*).\]
			The result follows by composing the above isomorphism with the isomorphism $\pi_1(G,*)\xrightarrow{\cong} G$.
		\end{proof}
		In particular, $(\mathcal{F}_{T})_*=\id_{\pi_1(\C,c_0)}$.
		
		The following corollary is easy to obtain.
		\begin{coro}\label{coro:F_cte_sii_alpha_0}
			With the above definitions, the functor $\mathcal{F}_{T,\alpha}$ is trivial if and only if $\alpha=0$.
		\end{coro}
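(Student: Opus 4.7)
The plan is to derive both directions directly from Lemma \ref{lemm:pi=aut}, which identifies the induced map on fundamental groups with $\alpha$ itself. The statement to prove is an ``if and only if'', so I will address each implication separately, but both are short.

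For the easy direction, assume $\alpha = 0$, i.e., $\alpha$ is the constant homomorphism sending every element of $\pi_1(\C,c_0)$ to the identity of $G$. Since by definition $\mathcal{F}_{T,\alpha} = \alpha \circ \mathcal{F}_T$ and $\mathcal{F}_T$ lands in $\pi_1(\C,c_0)$, the composite sends every arrow of $\C$ to the identity of $G$, so $\mathcal{F}_{T,\alpha}$ is trivial as a functor.

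For the converse, assume $\mathcal{F}_{T,\alpha}$ is trivial, meaning it sends every arrow of $\C$ to the identity of $G$. Then the induced map $B\mathcal{F}_{T,\alpha}\colon B\C \to BG$ factors through the constant map at the basepoint, so it induces the zero map on $\pi_1$; equivalently, $(\mathcal{F}_{T,\alpha})_*\colon \pi_1(\C,c_0) \to \pi_1(G,*)$ is trivial. Invoking Lemma \ref{lemm:pi=aut}, the composition
\[\pi_1(\C,c_0) \xrightarrow{(\mathcal{F}_{T,\alpha})_*} \pi_1(G,*) \xrightarrow{\cong} G\]
equals $\alpha$, and since the first arrow is trivial, it follows that $\alpha = 0$.

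There is no substantial obstacle here since the work has been done in Lemma \ref{lemm:pi=aut}; the only minor point worth spelling out is why a functor $F\colon\C\to G$ that is trivial at the level of arrows induces the trivial map on $\pi_1$, but this is immediate because $BF$ is then the constant map at the unique $0$-cell of $BG$. Hence the corollary follows.
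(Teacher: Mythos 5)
Your proof is correct and follows essentially the same route as the paper: the forward direction by factoring through $\alpha$, and the converse by observing that a trivial functor induces a constant map on classifying spaces, hence the trivial map on $\pi_1$, and then invoking Lemma \ref{lemm:pi=aut} to conclude $\alpha=(\mathcal{F}_{T,\alpha})_*=0$. The extra remark on why a trivial functor kills $\pi_1$ is a welcome clarification but does not change the argument.
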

		\begin{proof}
			Since $\mathcal{F}_{T,\alpha}$ factors through $\alpha$, it will be trivial if $\alpha=0$. On the other hand, if $\mathcal{F}_{T,\alpha}$ is trivial, then $B\mathcal{F}_{T,\alpha}$ is a constant map. From the previous lemma, it follows that  $\alpha=(\mathcal{F}_{T,\alpha})_*=0$.
		\end{proof}
		
		Let $\C$ and $\Ds$ be two connected small categories, let $T_\C$ and $T_\Ds$ be maximal trees in $\C$ and $\Ds$, respectively, and let $F\colon\C\to\Ds$ be a functor such that $F(T_\C)\subseteq T_\Ds$. Given $c_0\in\obj(\C)$, we have the following commutative diagram:
		\begin{figure}[!ht]
			\begin{center}
				\begin{tikzpicture}[x=3cm,y=2.5cm]
					\draw (0,1) node(C){$\C$};
					\draw (1,1) node(LC){$\loc \C$};
					\draw (2,1) node(LCLT){$\loc \C/\loc T_\C$};
					\draw (3,1) node(pi1C){$\pi_1(\C,c_0)$};
					\draw (0,0) node(D){$\Ds$};
					\draw (1,0) node(LD){$\loc \Ds$};
					\draw (2,0) node(LDLT){$\loc \Ds/\loc T_\Ds$};
					\draw (3,0) node(pi1D){$\pi_1(\Ds,F(c_0))$};
					\draw[->] (C) -- (LC) node [midway,above] {$\iota_\C$};
					\draw[->] (LC) -- (LCLT) node [midway,above] {$q_\C$};
					\draw[->] (LCLT) -- (pi1C) node [midway,above] {$\cong$};
					\draw[->] (D) -- (LD) node [midway,below] {$\iota_\Ds$};
					\draw[->] (LD) -- (LDLT) node [midway,below] {$q_\Ds$};
					\draw[->] (LDLT) -- (pi1D) node [midway,below] {$\cong$};
					\draw[->] (C) -- (D) node [midway,left] {$F$};
					\draw[->] (LC) -- (LD) node [midway,left]{$\loc F$};
					\draw[->] (LCLT) -- (LDLT) node [midway,right]{$\overline{\loc F}$};
					\draw[->] (pi1C)-- (pi1D) node [midway,right]{$F_*$};
				\end{tikzpicture}
			\end{center}
			\label{figu:diagrama}
		\end{figure}
	
		Suppose that $X$ is a connected Alexandroff space, that $A$ is a connected subspace of $X$, that there exists a maximal tree $T$ of $X$ whose intersection with $A$ is a maximal tree $T_A$ of $A$ and that we have a group homomorphism  $\alpha\colon \pi_1(X,a_0)\to G$ for some $a_0\in A$ and some group $G$.
		
		Let $i\colon A\to X$ be the inclusion and let $i_*\colon\pi_1(A,a_0)\to \pi_1(X,a_0)$ be the group homomorphism induced by $i$. From the above diagram, we see that $\mathcal{F}_{T_A,\alpha i_*}$ is the restriction of $\mathcal{F}_{T,\alpha}$ to $A$. From Corollary \ref{coro:F_cte_sii_alpha_0} it follows that $\mathcal{F}_{T,\alpha}$ is trivial on $A$ if and only if $i_\ast(\pi_1(A,a_0))\subseteq \ker \alpha$.
		
		Now, if $A$ is not connected, we will have to consider several basepoints, at least one for each connected component of $A$, to determine whether the functor $\mathcal{F}_{T,\alpha}$ is trivial on $A$. Hence, we will have to consider several basepoints on $X$.
	
		\begin{prop}\label{prop:i_ker_F_trivial}
			Let $X$ be a connected Alexandroff space, let $A$ be a non-empty subspace of $X$ and let $i\colon A\to X$ be the inclusion map. Let $T$ be a maximal tree on $X$, let $x_0\in X$ and let $\alpha\colon \pi_1(X,x_0)\to G$ be a group homomorphism for some group $G$.
			Suppose that the intersection of $T$ with each connected component of $A$ is a tree, and hence, a maximal tree on said component.
			
			For each $a\in A$, let $\theta_a\colon\pi_1(X,a)\cong\pi_1(X,x_0)$ be the isomorphism induced by the path-homotopy class of some path in $X$ from $a$ to $x_0$ that corresponds to the unique arrow in $\loc T$ from $a$ to $x_0$.
			
			Then, the functor $\mathcal{F}_{T,\alpha}$ is trivial on $A$ if and only if $i_*(\pi_1(A,a))\subseteq\ker\alpha \theta_a$ for every $a\in A$.
		\end{prop}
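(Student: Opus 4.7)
The plan is to reduce the statement to the connected case already discussed immediately before the proposition and recorded by Corollary \ref{coro:F_cte_sii_alpha_0}. Since the codomain $G$ of $\mathcal{F}_{T,\alpha}$ is a one-object category, the functor is trivial on $A$ if and only if its restriction to every connected component of $A$ is trivial. Hence I fix a connected component $A_c$ of $A$, pick $a\in A_c$, and aim to prove that $\mathcal{F}_{T,\alpha}|_{A_c}$ is trivial if and only if $i_*(\pi_1(A_c,a))\subseteq\ker\alpha\theta_a$. This suffices: the ``only if'' half of the proposition then follows by running the equivalence at an arbitrary $a\in A$, while the ``if'' half follows by picking one basepoint per component.

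By hypothesis $T_{A_c}:=T\cap A_c$ is a maximal tree of $A_c$, and the inclusion $i\colon A_c\hookrightarrow X$ satisfies $i(T_{A_c})\subseteq T$. I would then apply the commutative diagram stated immediately before the proposition with $(\C,T_\C,c_0)=(A_c,T_{A_c},a)$ and $(\Ds,T_\Ds)=(X,T)$; using the basepoint $a$ on both sides gives
\[
\mathcal{F}_{T}^{\,a}\circ i=i_*\circ \mathcal{F}_{T_{A_c}},
\]
where $\mathcal{F}_T^{\,a}\colon X\to \pi_1(X,a)$ is the instance of $\mathcal{F}_T$ built with basepoint $a$ in place of $x_0$, and $i_*\colon\pi_1(A_c,a)\to \pi_1(X,a)$.

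The remaining step is to compare $\mathcal{F}_T^{\,a}$ with $\mathcal{F}_T^{\,x_0}$. The quotient $\loc X/\loc T$ admits canonical isomorphisms $\rho_X^{x_0}$ and $\rho_X^{a}$ onto $\aut_{\loc X}(x_0)$ and $\aut_{\loc X}(a)$ respectively, given by the explicit formula recalled at the end of the localization subsection, and a direct computation with that formula shows that $\rho_X^{x_0}\circ(\rho_X^{a})^{-1}$ is conjugation by the unique arrow in $\loc T$ from $a$ to $x_0$. Transported through the isomorphism $\loc X\cong \Pi_1(X)$ of Theorem \ref{theo:loc_iso_pi}, this conjugation is precisely the change-of-basepoint isomorphism $\theta_a$ from the statement. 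Therefore $\mathcal{F}_T^{\,x_0}=\theta_a\circ\mathcal{F}_T^{\,a}$, and combining with the previous equality yields
\[
\mathcal{F}_{T,\alpha}|_{A_c}=\alpha\circ\theta_a\circ i_*\circ\mathcal{F}_{T_{A_c}}=\mathcal{F}_{T_{A_c},\,\alpha\theta_a i_*}.
\]

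Now Corollary \ref{coro:F_cte_sii_alpha_0}, applied to the connected category $A_c$ and tree $T_{A_c}$, says that $\mathcal{F}_{T_{A_c},\,\alpha\theta_a i_*}$ is trivial if and only if $\alpha\theta_a i_*=0$, i.e.\ $i_*(\pi_1(A_c,a))\subseteq \ker\alpha\theta_a$. Since $a\in A_c$ forces $\pi_1(A,a)=\pi_1(A_c,a)$, this completes the reduction. The only real obstacle is careful bookkeeping with basepoints: the hypothesis that $T$ restricts to a maximal tree on every component of $A$ is what makes the reduction to the connected case clean, and $\theta_a$ is precisely the correction needed to pass between the basepoint $a$ natural to $A_c$ and the fixed basepoint $x_0$ of $X$.
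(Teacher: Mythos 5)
Your proposal is correct and follows essentially the same route as the paper: both identify $\theta_a$ with the conjugation by the unique arrow of $\loc T$ from $a$ to $x_0$ relating the two basepoint identifications of $\loc X/\loc T$, deduce that the restriction of $\mathcal{F}_{T,\alpha}$ to the component of $a$ equals $\mathcal{F}_{T_a,\,\alpha\theta_a i_*}$, and then conclude by Corollary \ref{coro:F_cte_sii_alpha_0}. The only cosmetic difference is that you first restrict to the component and then correct the basepoint, whereas the paper first rewrites $\mathcal{F}_{T,\alpha}=\mathcal{F}_{T,\alpha\theta_a}$ via its commutative diagram and then restricts.
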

		\begin{proof}
			Let $a\in A$. 
			Consider the following diagram in the category $\cat_*$
				\begin{center} 
					\begin{tikzpicture}[x=2.7cm,y=2.5cm]
						\draw (1,1) node(LXLTa){$(\loc X/\loc T,[a])$};
						\draw (1,0) node(LXLTx){$(\loc X/\loc T,[x_0])$};
						\draw (2.5,1) node(auta){$\aut_{\loc X}(a)$};
						\draw (4,1) node(piXa){$\pi_1(X,a)$};
						\draw (5,1) node(G){$G$};
						\draw (2.5,0) node(autx){$\aut_{\loc X}(x_0)$};
						\draw (4,0) node(piXx){$\pi_1(X,x_0)$};
						\draw[double equal sign distance] (LXLTa)--(LXLTx) node [midway,left] {};
						\draw[->] (LXLTx)--(autx) node [midway,below] {$\cong$};
						\draw[->] (auta)--(piXa) node [midway,above] {$\cong$};
						\draw[->] (piXa)--(G) node [midway,above] {$\alpha \theta_a$};
						\draw[->] (LXLTa)--(auta) node [midway,above] {$\cong$};
						\draw[->] (autx)--(piXx) node [midway,below] {$\cong$};
						\draw[->] (piXx)--(G) node [midway,below right] {$\alpha$};
						\draw[->] (auta)--(autx) node [midway,left] {$\cong$};
						\draw[->] (piXa)--(piXx) node [midway,left] {$\theta_a$};
					\end{tikzpicture}
				\end{center}
				where the isomorphism $\aut_{\loc X}(a)\cong\aut_{\loc X}(x_0)$ is the obvious isomorphism induced by the only arrow in $\loc T$ from $a$ to $x_0$. It is easy to see that the diagram commutes, from where it follows that $\mathcal{F}_{T,\alpha}=\mathcal{F}_{T,\alpha \theta_a}$.
				
				Write $T_a$ for the tree that is obtained by intersecting $T$ with the connected component of $A$ that contains $a$.
				Note that the restriction $\mathcal{F}_{T,\alpha \theta_a}$ to $A$ is nothing but the functor $\mathcal{F}_{T_a,\alpha \theta_a i_*}$
				Now, $\mathcal{F}_{T,\alpha}$ is trivial on the connected component of $A$ that contains $a$ if and only if $\mathcal{F}_{T,\alpha \theta_a}$ is trivial on said component, and hence, if and only if the functor $\mathcal{F}_{T_a,\alpha \theta_a i_*}$ is trivial. By Corollary \ref{coro:F_cte_sii_alpha_0}, this happens if and only if $\alpha \theta_a i_*=0$, or equivalently, if and only if $i_*(\pi_1(A,a))\subseteq \ker \alpha \theta_a$. The result easily follows.
		\end{proof}
		\begin{coro}
			Let $X$ be a connected Alexandroff space and let $A$ be a subspace of $X$ such that the inclusion map $i\colon A\to X$ induces the trivial map $i_*\colon \pi_1(A,a)\to\pi_1(X,a)$ for every basepoint $a\in A$. Then there exists a maximal tree $T$ in $X$ such that $\mathcal{F}_{T}$ is trivial on $A$. 
		\end{coro}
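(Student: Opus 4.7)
The plan is to apply Proposition \ref{prop:i_ker_F_trivial} with $\alpha$ taken to be the identity automorphism of $\pi_1(X,x_0)$, for an arbitrarily chosen basepoint $x_0\in X$. With this choice, the condition $i_*(\pi_1(A,a))\subseteq\ker(\alpha\theta_a)$ collapses to $i_*(\pi_1(A,a))\subseteq\ker\theta_a$; since $\theta_a$ is an isomorphism its kernel is trivial, so the condition becomes the vanishing of $i_*$ on every component, which is precisely the hypothesis. Hence, once a maximal tree $T$ of $X$ is produced whose intersection with each connected component of $A$ is a tree, Proposition \ref{prop:i_ker_F_trivial} will immediately give that $\mathcal{F}_T=\mathcal{F}_{T,\id}$ is trivial on $A$.

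To construct such a $T$, for each connected component $A_\lambda$ of $A$ I would choose a maximal tree $T_\lambda$ of $A_\lambda$ (which exists by Proposition \ref{prop:forest} applied to $A_\lambda$ with the discrete wide subcategory as the starting forest). Let $B$ be the wide subcategory of $X$ whose non-identity arrows are the union of the arrow sets of all the $T_\lambda$. Then the connected components of $B$ are exactly the $T_\lambda$, together with a singleton component for each object of $X\setminus A$; in particular $B$ is a forest in $X$, so by Proposition \ref{prop:forest} it extends to a maximal tree $T$ of $X$.

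What remains is to verify that $T\cap A_\lambda=T_\lambda$ for every $\lambda$, so that the intersection is a tree. Reading the proof of Proposition \ref{prop:forest} (which applies Lemma \ref{lemm:tree} iteratively via Zorn's lemma), the extension of $B$ to $T$ is achieved by successively adjoining arrows of $X$ whose source and target lie in \emph{different} connected components of the current forest. Because $T_\lambda$ is connected and wide in $A_\lambda$, every pair of objects of $A_\lambda$ is already in the same component of $B$, and therefore in the same component at every subsequent stage. Consequently no newly adjoined arrow can have both endpoints in $A_\lambda$, so the arrows of $T$ with both source and target in $A_\lambda$ are precisely the arrows of $T_\lambda$; that is, $T\cap A_\lambda=T_\lambda$, which is a tree.

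The main obstacle is precisely this bookkeeping step: one has to ensure that the enlargement of $B$ to a maximal tree of $X$ introduces no new arrows internal to any $A_\lambda$, and for this it is essential that each $T_\lambda$ already contains all objects of $A_\lambda$ in a single component. With this in hand, Proposition \ref{prop:i_ker_F_trivial} applied with $\alpha=\id$ yields the conclusion.
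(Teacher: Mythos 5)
Your overall strategy is exactly the paper's: take a maximal tree $T_\lambda$ in each component $A_\lambda$ of $A$, let $B$ be the resulting wide forest, extend it to a maximal tree $T$ of $X$ via Proposition \ref{prop:forest}, and apply Proposition \ref{prop:i_ker_F_trivial} with $\alpha=\id_{\pi_1(X,x_0)}$, so that $\ker(\alpha\theta_a)$ is trivial and the required condition becomes precisely the vanishing of $i_*$ at every basepoint. That reduction is correct. The gap is in your verification that $T\cap A_\lambda=T_\lambda$. The proof of Proposition \ref{prop:forest} does not build $T$ by successively adjoining arrows between distinct components; it takes, by Zorn's lemma, a maximal element of the poset of \emph{all} wide forests containing $B$, and only afterwards uses Lemma \ref{lemm:tree} to show that such a maximal element must be connected. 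A maximal wide forest containing $B$ can perfectly well contain arrows internal to some $A_\lambda$ that are not in $T_\lambda$. Concretely, let $X=\{p,a,b,c,d\}$ with $p<a$, $p<b$ and $a,b<c,d$, and let $A=\{a,b,c,d\}$, so that $i_*\colon\Z\to 0$ is trivial and the hypothesis of the corollary holds. Since $X$ has a minimum, $X$ itself is a tree, hence it is the unique maximal wide forest containing any $B$; but $X\cap A=A$ is weakly equivalent to $S^{1}$ and is not a tree. So the hypothesis of Proposition \ref{prop:i_ker_F_trivial} is not established by your argument (in this example the conclusion still holds because $\pi_1(X,x_0)=0$, but the deduction as written breaks).

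The repair is to run the Zorn argument over the smaller poset of wide forests $F\supseteq B$ all of whose arrows with both endpoints in a single $A_\lambda$ already lie in $T_\lambda$. Unions of chains stay in this poset, and the connectivity step survives: each $A_\lambda$ lies in a single connected component of any such $F$ (because $T_\lambda\subseteq F$ is connected and wide in $A_\lambda$), so the connecting arrow $f$ of Lemma \ref{lemm:tree}, and every composite $\beta f\alpha$ that the enlarged tree is forced to contain, has source and target in different components and hence is never internal to a single $A_\lambda$. With that modification your bookkeeping goes through and $T\cap A_\lambda=T_\lambda$, as needed. The paper itself leaves this point at ``it is not hard to see,'' so your instinct to spell it out was sound; what fails is only the appeal to the literal mechanism of the proof of Proposition \ref{prop:forest}.
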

		\begin{proof}
			We define a forest $B$ in $X$ by choosing a maximal tree in each connected component of $A$. By \ref{prop:forest}, we can extend $B$ to a maximal tree $T$ of $X$. It is not hard to see that the hypotheses of Proposition \ref{prop:i_ker_F_trivial} are satisfied from where it follows that $\mathcal{F}_{T}$ is trivial on $A$.
		\end{proof}
		The next theorem and its corollary show that the functors $\mathcal{F}_{T,\alpha}$ can be used to represent the regular coverings of $X$ (compare with \cite[Theorem 3.6]{barmak2012colorings}). The importance of said functors being trivial on certain subspaces of $X$ will be made explicit. If $X$ is an Alexandroff space, $T$ is a maximal tree of $X$, $\alpha\colon \pi_1(X,x_0)\to G$ is a homomorphism of groups for some $x_0\in X$ and some group $G$ and $\mathcal{F}_{T,\alpha}$ is trivial on a subspace $A$ of $X$, then the inverse image of $A$ through the corresponding covering map is nothing but the product $A\times G$ (where $G$ is considered as a discrete topological space) and thus, is a disjoint union of copies of $A$ indexed by $G$. This fact was already used in \cite{cianci2018splitting} and was a key step to our results about the minimal finite models of the real projective plane, the torus and the Klein bottle.
		
		\begin{theo}\label{theo:rev_reg_de_alex}
			Let $X$ be a connected Alexandroff space and let $x_0\in X$. 
			Let $G$ be a group and consider it as a category with a unique object $*$. Let $\mathcal{F}\colon X\to G$ be a functor.
			Let $\widetilde{X}=\mathcal{F}\downarrow *$ (where $*$ denotes the only possible functor from the terminal category to $G$), let $p\colon\widetilde{X}\to X$ be the canonical projection and let $\tilde{x}_0\in p^{-1}(x_0)$.
			Let $\mathcal{F}_*\colon\pi_1(X,x_0)\to G$ be the map induced by $\mathcal{F}$ between the fundamental groups.
			Then, the projection $p$ is the regular covering map of $X$ that corresponds to the kernel of the map $\mathcal{F}_*$, with $\pi_0(\widetilde{X},\tilde{x}_0)\cong\coker\mathcal{F}_*$. 
		
			If, in particular, $\mathcal{F}=\mathcal{F}_{T,\alpha}$ for some maximal tree $T$ of $X$ and some group homomorphism $\alpha\colon \pi_1(X,x_0)\to G$, then $p$ is the regular covering map of $X$ that corresponds to the group $\ker\alpha$, with $\pi_0(\widetilde{X},\tilde{x}_0)\cong\coker\alpha$. 
		\end{theo}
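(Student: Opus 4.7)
The approach is to present $\widetilde{X}$ explicitly as an Alexandroff space, verify by inspection of minimal open sets that $p$ is a covering, and identify the monodromy of $p$ with the map $\mathcal{F}_*$ via the natural identifications established in Theorem \ref{theo:loc_iso_pi} and Corollary \ref{coro:zeta_nat}.

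First I would unwind the comma category: an object of $\widetilde{X}=\mathcal{F}\downarrow *$ is a pair $(x,g)\in X\times G$, and an arrow $(x,g)\to (y,h)$ exists exactly when $x\leq y$ in $X$ and $g=h\cdot\mathcal{F}(x\leq y)$. Via $\ATop\cong\ord$ this endows $\widetilde{X}$ with the structure of an Alexandroff space on the underlying set $X\times G$, with projection $p(x,g)=x$. Computing minimal open neighbourhoods gives
\[U_{(y,h)}=\{(x,\,h\cdot\mathcal{F}(x\leq y)^{-1}):x\leq y\},\]
and a direct check shows that $p$ restricts to a homeomorphism $U_{(y,h)}\to U_y$ and that $p^{-1}(U_y)=\bigsqcup_{h\in G}U_{(y,h)}$; since $\{U_y:y\in X\}$ is a basis of $X$, this proves $p$ is a covering map. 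Moreover, the rule $g_0\cdot(x,g)=(x,g_0g)$ defines a free left $G$-action on $\widetilde{X}$ that commutes with $p$ and is transitive on every fiber, hence acts by deck transformations.

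The key step is the monodromy identification. Let $\widetilde{X}_0$ be the connected component of $\tilde{x}_0=(x_0,e)$. By Theorem \ref{theo:loc_iso_pi} the group $\pi_1(X,x_0)$ is canonically isomorphic to $\aut_{\loc X}(x_0)$, with $[\eta(x\leq y)]$ corresponding to the image of $x\leq y$ in $\loc X$. By construction, the basic path $\eta(x\leq y)$ lifts from $(x,g)$ to the path ending at $(y,\,g\cdot\mathcal{F}(x\leq y)^{-1})$. Concatenating such lifts along a zig-zag at $x_0$ representing a class $\gamma$ yields an endpoint $g_0\cdot\tilde{x}_0$ where $g_0$ is the image of $\gamma$ under $\loc\mathcal{F}\colon\loc X\to\loc G=G$; by Corollary \ref{coro:zeta_nat} this agrees with $\mathcal{F}_*(\gamma)$ up to inversion, which does not affect triviality. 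Consequently a loop in $X$ at $x_0$ lifts to a loop in $\widetilde{X}_0$ iff $\mathcal{F}_*([\gamma])=e$, so $p_*(\pi_1(\widetilde{X}_0,\tilde{x}_0))=\ker\mathcal{F}_*$. Since $\ker\mathcal{F}_*$ is normal in $\pi_1(X,x_0)$, the restricted covering $p|_{\widetilde{X}_0}$ is regular and realizes the subgroup $\ker\mathcal{F}_*$.

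To conclude, two fiber points $(x_0,g),(x_0,h)$ lie in the same connected component of $\widetilde{X}$ iff $g^{-1}h\in\mathcal{F}_*(\pi_1(X,x_0))$, which gives $\pi_0(\widetilde{X},\tilde{x}_0)\cong\coker\mathcal{F}_*$ as pointed sets. In the specialization $\mathcal{F}=\mathcal{F}_{T,\alpha}$, Lemma \ref{lemm:pi=aut} yields $\mathcal{F}_*=\alpha$ and the second assertion follows immediately. The main obstacle I anticipate is the monodromy identification in the third paragraph: because paths in an Alexandroff space are not literally concatenations of basic paths, one must pass through Theorem \ref{theo:loc_iso_pi} to reduce path-homotopy in $X$ to formal computations in $\loc X$, where the effect of $\mathcal{F}$ becomes transparent.
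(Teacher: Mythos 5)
Your proposal is correct, and the first half (unwinding the comma category, the explicit preorder on $X\times G$, and the verification via minimal open sets that $p$ is a covering) coincides with the paper's argument almost verbatim. Where you genuinely diverge is in the identification of $p_*\pi_1$ and $\pi_0$: the paper observes that each $g\in G$ induces a deck transformation $(x,h)\mapsto(x,gh)$ whose classifying-space map is a homeomorphism, and then applies Quillen's Theorem B to obtain the exact sequence $0\to\pi_1(\widetilde{X})\to\pi_1(X)\xrightarrow{\mathcal{F}_*}G\to\pi_0(\widetilde{X})\to 0$ in one stroke (using Example \ref{exam:K(G,1)} to kill $\pi_2(BG)$), whereas you compute the monodromy by hand, lifting the basic paths $\eta(x\leq y)$ and reducing arbitrary loop classes to zig-zags via Theorem \ref{theo:loc_iso_pi}. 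Your route is more elementary --- it needs only classical covering-space theory plus the isomorphism $\Pi_1(X)\cong\loc X$ already proved in the paper --- at the price of having to track the left/right and opposite-group conventions carefully (you rightly note the inversion is harmless for kernel and image); the paper's route imports a deep theorem but packages the kernel and cokernel statements simultaneously and uniformly. Two small points to tidy up: your formula $U_{(y,h)}=\{(x,h\cdot\mathcal{F}(x\leq y)^{-1}):x\leq y\}$ is inconsistent with the arrow condition $g=h\cdot\mathcal{F}(x\leq y)$ you stated just before (one of the two should lose its inverse; the discrepancy is exactly the $\mathcal{F}\downarrow *$ versus $*\downarrow\mathcal{F}$ ambiguity of Remark \ref{rema:rev_reg_de_alex}); and the statement allows an arbitrary $\tilde{x}_0\in p^{-1}(x_0)$, so you should add one line observing that the free $G$-action is transitive on the fiber and acts by homeomorphisms over $X$, whence $p_*\pi_1(\widetilde{X},\tilde{x}_0)$ is independent of the choice of $\tilde{x}_0$ (the paper does this by a conjugation argument). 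Neither issue is a genuine gap.
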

		\begin{proof}
			A direct computation shows that $\obj(\widetilde{X})=X\times G$ and that for every $(x,g),(x',g')\in\widetilde{X}$ there exists a unique arrow in $\widetilde{X}$ from $(x,g)$ to $(x',g')$ if and only if $x\leq x'$ in $X$ and $g'\mathcal{F}(x\leq x')=g$ in $G$. It follows that $\widetilde{X}$ is an Alexandroff space whose preorder is defined by
			\[(x,g)\leq (x',g')\Leftrightarrow x\leq x'\text{ and } g=g'\mathcal{F}(x\leq x')\]
			for $(x,g),(x',g')\in \widetilde{X}$ and that $p$ is a continuous map.
			
			We will show that for every $x\in X$
			\begin{enumerate}
				\item the set $p^{-1}(U_x)$ is the union $\bigcup\limits_{g\in G} U_{(x,g)}$,
				\item the sets $U_{(x,g)}$ with $g\in G$ are mutually disjoint, and
				\item the set $U_{(x,g)}$ is mapped homeomorphically on $U_x$ by $p$ for every $g\in G$.
			\end{enumerate}

			Let $x\in X$. We will show (1). 
			If $(x',g')\in p^{-1}(U_x)$ then $x'\leq x$ and therefore $(x',g')\leq (x,g)$ where $g=g'\mathcal{F}(x'\leq x)^{-1}$. It follows that $p^{-1}(U_x)\subseteq\bigcup\limits_{g\in G} U_{(x,g)}$. The other inclusion is obvious.
			
			It is easy to see (2). Indeed, if $(a,h)\in U_{(x,g)}\cap U_{(x,g')}$ for some $g,g'\in G$, then 
			\[g\mathcal{F}(a\leq x)=h=g'\mathcal{F}(a\leq x)\] from where it follows that $g=g'$. 
			
			We will now prove (3). Let $g\in G$. The function $\phi_g\colon U_x\to U_{(x,g)}$ defined by 
			\[\phi_g(x')=(x',g\mathcal{F}(x'\leq x))\] for $x'\in U_x$ is an inverse of the restriction $p|\colon U_{(x,g)}\to U_x$ of $p$.
			
			We conclude that $p$ is a covering map of $X$.
			
			Now, for every $g\in G$, the functor $g_*\colon \widetilde{X}\to\widetilde{X}$ of \cite[Theorem B]{quillen1973higher} induced by $g$ is nothing but the deck transformation of $p$ defined by
			\[g_*(x,h)=(x,gh)\] for every $(x,h)\in\widetilde{X}$. It follows that $B(g_*)$ is a homeomorphism for every $g\in G$. By a direct application of Quillen's Theorem $B$ we obtain the following part of a long exact sequence
			\[\pi_2(G,*)\to\pi_1(\widetilde{X},(x_0,e))\xrightarrow{p_*}\pi_1(X,x_0)\xrightarrow{\mathcal{F}_*}\pi_1(G,*)\to\pi_0(\widetilde{X},(x_0,e))\xrightarrow{p_*}\pi_0(X,x_0)\] where $e$ is the identity of $G$.
			By \ref{exam:K(G,1)} we see that there exists an exact sequence 
			\[0\to\pi_1(\widetilde{X},(x_0,e))\xrightarrow{p_*}\pi_1(X,x_0)\xrightarrow{\mathcal{F}_*}G\to\pi_0(\widetilde{X},(x_0,e))\to 0.\]
			It follows that $p_*(\pi_1(\widetilde{X},(x_0,e)))=\ker\mathcal{F}_*$ and that $\pi_0(\widetilde{X},(x_0,e))\cong\coker\mathcal{F}_*$.
			
			It is clear that $p_*(\pi_1(\widetilde{X},\tilde{x}_0))$ is a conjugate of $p_*(\pi_1(\widetilde{X},(x_0,e)))=\ker\mathcal{F}_*$ and hence, that $p_*(\pi_1(\widetilde{X},\tilde{x}_0))=\ker\mathcal{F}_*$ as we wished to show. On the other hand, $\pi_0(\widetilde{X},\tilde{x}_0)\cong\pi_0(\widetilde{X},(x_0,e))\cong\coker\mathcal{F}_*$.
			
			The last part of the theorem follows easily from \ref{lemm:pi=aut}.
		\end{proof}
	
		\begin{rem}\label{rema:rev_univ_sii_alpha_iso}
			From \ref{theo:rev_reg_de_alex} it follows that $\widetilde{X}=\mathcal{F}\downarrow\ast$ is connected if and only if $\alpha$ is an epimorphism and that $p\colon \widetilde X \to X$ is the universal covering of $X$ if and only if $\alpha$ is an isomorphism.
		\end{rem}
		\begin{rem}\label{rema:rev_reg_de_alex}
			It is not hard to see that Theorem \ref{theo:rev_reg_de_alex} admits an analogous version with $\widetilde{X}=*\downarrow \mathcal{F}$. Indeed, the map $(x,g)\mapsto (x,g^{-1})$ from $\mathcal{F}\downarrow *$ to $*\downarrow\mathcal{F}$ is a homeomorphism over $X$.
		\end{rem}
		\begin{coro}\label{coro:rev_reg_de_alex}
			Let $X$ be a connected Alexandroff space, let $x_0\in X$ and let $H$ be a normal subgroup of $\pi_1(X,x_0)$. Then there exists a group $G$ and a functor $\mathcal{F}\colon X\to G$ such that the canonical projection $p\colon \mathcal{F}\downarrow *\to X$ is the connected regular covering of $X$ that corresponds to the subgroup $H$.
		\end{coro}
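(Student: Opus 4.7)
The plan is to apply Theorem \ref{theo:rev_reg_de_alex} directly by constructing $G$ and $\mathcal{F}$ of the form $\mathcal{F}_{T,\alpha}$. Since that theorem guarantees $p_*(\pi_1(\widetilde{X},\tilde{x}_0)) = \ker\alpha$, and Remark \ref{rema:rev_univ_sii_alpha_iso} ensures connectedness of $\widetilde{X}$ whenever $\alpha$ is an epimorphism, the natural choice is to take $G := \pi_1(X,x_0)/H$ and $\alpha\colon \pi_1(X,x_0)\to G$ to be the canonical quotient homomorphism, which is well-defined and surjective precisely because $H$ is a normal subgroup, and which has $\ker\alpha = H$.

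The first step is to invoke Proposition \ref{prop:forest} (applied to the empty subcategory, which is trivially a forest) to obtain a maximal tree $T$ of $X$; here one uses that $X$ is connected. With $T$, $x_0$, $G$ and $\alpha$ in hand, set $\mathcal{F} := \mathcal{F}_{T,\alpha}\colon X \to G$, as defined in subsection \ref{subs:coverings}.

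The conclusion then follows by applying the last part of Theorem \ref{theo:rev_reg_de_alex} to this $\mathcal{F}$: the projection $p\colon \mathcal{F}\downarrow * \to X$ is the regular covering of $X$ corresponding to the subgroup $\ker\alpha = H$, and Remark \ref{rema:rev_univ_sii_alpha_iso} guarantees that $\mathcal{F}\downarrow *$ is connected since $\alpha$ is surjective. No real obstacle is expected: all the nontrivial content (existence of maximal trees, the identification $(\mathcal{F}_{T,\alpha})_* = \alpha$ from Lemma \ref{lemm:pi=aut}, and the analysis of $\mathcal{F}\downarrow *$ as a regular cover) has already been established earlier in the paper, and the corollary is essentially a packaging of those results for an arbitrary prescribed normal subgroup $H$.
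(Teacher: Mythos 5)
Your proof is correct and takes essentially the same route as the paper: the paper also sets $G=\pi_1(X,x_0)/H$, takes $\alpha$ to be the quotient map, picks a maximal tree $T$, and applies Theorem \ref{theo:rev_reg_de_alex} with $\mathcal{F}=\mathcal{F}_{T,\alpha}$. Your additional explicit appeals to Proposition \ref{prop:forest} for the existence of $T$ and to Remark \ref{rema:rev_univ_sii_alpha_iso} for connectedness only spell out details the paper leaves implicit.
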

		\begin{proof}
			It follows immediately from Theorem \ref{theo:rev_reg_de_alex} by taking $G=\pi_1(X,x_0)/H$, the map $\alpha\colon \pi_1(X,x_0)\to G$ to be the quotient map and letting $\mathcal{F}=\mathcal{F}_{T,\alpha}$ for some maximal tree $T$ of $X$.
		\end{proof}
		\begin{ex}[{\cite[Example 2.4]{cianci2018splitting}}]\label{exam:revestimiento}
			Let $X$ be the following poset:
			\begin{displaymath}
				\begin{tikzpicture}[x=4cm,y=3cm]
					\tikzstyle{every node}=[font=\footnotesize]
					\draw (0,0) node[below]{$a$} node (a){$\bullet$};
					\draw (0.33,0) node[below]{$b$} node (b){$\bullet$};
					\draw (0,0.5) node[above]{$c$} node (c){$\bullet$};
					\draw (0.33,0.5) node[above]{$d$} node (d){$\bullet$};
					\draw (a)--(c);
					\draw (a)--(d);
					\draw (b)--(c);
					\draw (b)--(d);
				\end{tikzpicture}
			\end{displaymath}
			
			Let $T$ be the maximal tree of $X$ that contains the arrows $a\leq c$, $b\leq c$ and $b\leq d$, and let $\iota_X\colon X\to \loc X$ and $q_X\colon\loc X\to \loc X/\loc T$ as before. 

			We have that $\loc X/\loc T\cong\pi_1(X,a)\cong \Z$. It is not hard to see that
			$\aut_{\loc X}(a)$ is generated by the class of the sequence $(a\leq d,d\geq b,b\leq c,c\geq a)$, where $d\geq b$ and $c\geq a$ are the formal inverses of the arrows $b\leq d$ and $a\leq c$, respectively. It follows that $\eta=q_X(\iota_X(a\leq d))$ is a generator of the group $\loc X/\loc T$.
			
			Let $\alpha\colon\pi_1(X,a)\to\Z_6$ be the group homomorphism that maps the generator of $\pi_1(X,a)$ that identifies with $\eta$, to the element $2\in\Z_6$.
			The functor $\mathcal{F}_{T,\alpha}\colon X\to \Z_6$ is defined by $\mathcal{F}_{T,\alpha}(a\leq d)=2$, $\mathcal{F}_{T,\alpha}(a\leq c)=0$, $\mathcal{F}_{T,\alpha}(b\leq c)=0$ and $\mathcal{F}_{T,\alpha}(b\leq d)=0$.
			Hence, the space $\widetilde{X}=\mathcal{F}_{T,\alpha}\downarrow *$ is the following poset:
			\begin{displaymath}
				\begin{tikzpicture}[x=3.2cm,y=2.5cm]
					\tikzstyle{every node}=[font=\footnotesize]
					\foreach \n in {0,2,4}
					\draw (0.33*\n-0.33,0.5) node[above]{($c$,\n)} node(c\n){$\bullet$};
					\foreach \n in {0,2,4}
					\draw (0.33*\n,0.5) node[above]{($d$,\n)} node(d\n){$\bullet$};
					\foreach \n in {0,2,4}
					\draw (0.33*\n-0.33,0) node[below]{($a$,\n)} node(a\n){$\bullet$};
					\foreach \n in {0,2,4}
					\draw (0.33*\n,0) node[below]{($b$,\n)} node(b\n){$\bullet$};
					\foreach \x in {0,2,4} \draw (a\x)--(c\x);
					\foreach \x in {0,2,4} \draw (b\x)--(d\x);
					\foreach \x in {0,2,4} \draw (b\x)--(c\x);
					\draw (a0)--(d4);
					\draw (a2)--(d0);
					\draw (a4)--(d2);
					\foreach \n in {1,3,5}
					\draw (1.66+0.33*\n-0.33,0.5) node[above]{($c$,\n)} node(c\n){$\bullet$};
					\foreach \n in {1,3,5}
					\draw (1.66+0.33*\n,0.5) node[above]{($d$,\n)} node(d\n){$\bullet$};
					\foreach \n in {1,3,5}
					\draw (1.66+0.33*\n-0.33,0) node[below]{($a$,\n)} node(a\n){$\bullet$};
					\foreach \n in {1,3,5}
					\draw (1.66+0.33*\n,0) node[below]{($b$,\n)} node(b\n){$\bullet$};
					\foreach \x in {1,3,5} \draw (a\x)--(c\x);
					\foreach \x in {1,3,5} \draw (b\x)--(d\x);
					\foreach \x in {1,3,5} \draw (b\x)--(c\x);
					\draw (a1)--(d5);
					\draw (a3)--(d1);
					\draw (a5)--(d3);
				\end{tikzpicture}
			\end{displaymath}
			and the projection $p\colon\widetilde{X}\to X$ is a covering map. It is clear that 
			\[\pi_0(\widetilde{X},(a,0))\cong \Z_2 \cong \Z_6/\im \alpha.\]
			Using \cite[Theorem 1]{mccord1966singular} and standard results about the fundamental group of $S^{1}$, it can be shown that 
			\[p_*(\pi_1(\widetilde{X},(a,0)))=\ker\alpha.\]
			
			Note that since $\mathcal{F}_{T,\alpha}$ is trivial on $\{a,b,c\}$, then $p^{-1}(\{a,b,c\})$ is nothing but the product $\{a,b,c\}\times \Z_6$, where $\Z_6$ is considered as a discrete topological space. On the other hand, $\mathcal{F}_{T,\alpha}$ is \emph{not} trivial on $\{a,b,d\}$ and therefore $p^{-1}(\{a,b,d\})$ is \emph{not} equal to the product $\{a,b,d\}\times \Z_6$. However, $p^{-1}(\{a,b,d\})$ is homeomorphic to the product $\{a,b,d\}\times \Z_6$ since there exists a maximal tree $T'$ of $X$ such that $\mathcal{F}_{T',\alpha}$ is trivial on $\{a,b,d\}$.
			Finally, it is obvious that $\widetilde{X}$ is not homeomorphic to the product $X\times\Z_6$ since there is no maximal tree $T''$ on $X$ such that $\mathcal{F}_{T'',\alpha}$ is the trivial functor.
		\end{ex}
		
		\bibliographystyle{acm}
		\bibliography{ref2}
	\end{document}